\theoremstyle{definition}
\theoremstyle{definition}
\theoremstyle{plain}
\newtheorem{theo}{Theorem}
\theoremstyle{plain}
\newtheorem{thm}{Theorem}[subsection]
\theoremstyle{definition}
\newtheorem{ex}[thm]{Example}
\theoremstyle{definition}
\theoremstyle{definition}
\theoremstyle{definition}
\newtheorem{defin}[thm]{Definition}
\theoremstyle{definition}
\newtheorem{rem}[thm]{Remark}
\theoremstyle{plain}
\newtheorem{prop}[thm]{Proposition}
\theoremstyle{plain}
\newtheorem{lem}[thm]{Lemma}
\theoremstyle{plain}
\theoremstyle{plain}
\newtheorem{cor}[thm]{Corollary}
\theoremstyle{definition}
\theoremstyle{definition}
\theoremstyle{definition}
\theoremstyle{definition}
\numberwithin{equation}{subsection}
\def\Z{\mathbb{Z}}
\def\dim{{\rm dim}}
\def\Im{{\rm Im}}
\def\Ker{{\rm Ker}}
\begin{document}

 \title{Computing representation matrices for the action\\ of Frobenius to cohomology groups}
\author{Momonari Kudo\thanks{Graduate School of Mathematics, Kyushu University. E-mail: \texttt{m-kudo@math.kyushu-u.ac.jp}}}
\maketitle

\begin{abstract}
This paper is concerned with the computation of representation matrices for the action of Frobenius to the cohomology groups of algebraic varieties.
Specifically we shall give an algorithm to compute the matrices for {\it arbitrary} algebraic varieties with defining equations over perfect fields of positive characteristic, and estimate its complexity.
Moreover, we propose a specific efficient method, which works for complete intersections.
\end{abstract}

\section{Introduction}\label{sec:Intro}

Let $K$ be a perfect field of positive characteristic $p$.
For a positive integer $r$, let $\mathbf{P}^r = \mathrm{Proj} ( S )$ denote the projective $r$-space, where
$S = K [ x_0, \ldots , x_r ]$ is the polynomial ring of $(r + 1)$ indeterminates over $K$.
Given a scheme $X \subset \mathbf{P}^r$ and $q \in \Z$, we denote by $\mathcal{O}_X$ and $H^q ( X, \mathcal{O}_X )$ its structure sheaf and its $q$-th cohomology group, respectively.
Let $F$ be the absolute Frobenius on $X$.
It is significant to compute the action of Frobenius to the cohomology groups, denoted by $F^{\ast,q}$ for the $q$-th cohomology group, since algebraic varieties defined over $K$ can be classified by investigating $F^{\ast,q}$.
For example, a non-singular curve is {\it superspecial} if and only if the Frobenius on the $1$st cohomology group is zero.
Specifically the representation matrix of $F^{\ast,1}$ for a curve is said to be its {\it Hasse-Witt matrix}.
Given a curve with explicit defining equations, computing $F^{\ast,1}$ allows us to determine its superspecialty.
Thus the computation of $F^{\ast,q}$ is one of the most crucial topics in computational algebraic geometry.

In cases of curves, i.e., one-dimensional algebraic varieties, there are many previous works for computing $F^{\ast,1}$, see e.g., \cite{BGS}, \cite{Gonz}, \cite {Harvey}, \cite{Matsuo}, \cite{Manin} and \cite{Yui}.
We here briefly review results on computation methods only for elliptic curves and hyperelliptic curves.

First, for the case of elliptic curves, there is a well-known and standard method to compute $F^{\ast,1}$.
Let $E$ be an elliptic curve in $\mathbf{P}^2$ defined by a cubic form $f \in K [ x, y, z]$, and $F$ the absolute Frobenius on $E$.
The $1$st cohomology group $H^1 ( E, \mathcal{O}_E )$ has a basis $\{ x^{-1} y^{-1} z^{-1} \}$, and thus the Frobenius action $F^{\ast,1} : H^1 ( E, \mathcal{O}_E ) \longrightarrow H^1 ( E, \mathcal{O}_E )$ sends the basis element to $f^{p-1} \cdot ( x^{-1} y^{-1} z^{-1} )^p$.
One can determine whether $F^{\ast,1} = 0$ or not by computing the values of coefficients in $f^{p-1}$, see \cite[Chapter I\hspace{-.1em}V]{Har} for more details. 

Second, for the case of hyperelliptic curves, in \cite{Manin} or in the proof of \cite[Proposition 2.1]{Yui}, one has an explicit method to get the representation matrix of $F^{\ast,1}$ when a hyperelliptic curve is given as an affine model $y^2 = f(x)$.
In this case, $F^{\ast,1}$ is determined by computing the values of coefficients in $f^{(p-1)/2}$, where $p$ is an odd prime.
Based on this method, several algorithms and their improvements to compute $F^{\ast,1}$ for hyperelliptic curves have been proposed, see e.g., \cite{BGS}, \cite {Harvey}, and \cite{Matsuo}.

While several algorithms for specific cases have been published as above, no general-purpose explicit algorithm, which works for {\it arbitrary} algebraic varieties (of dimension $\geq 1$) with defining equations, has been proposed and precisely analyzed yet.
(cf.\ In \cite[Section 5]{Kudo}, a basic framework is proposed, but neither written as a concrete algorithm nor precisely analyzed.)
This is because representing elements of $H^q (X, \mathcal{O}_X)$ depends on one's specific choice of an open covering for $X$.
In this paper, we give two algorithms for computing $F^{\ast,q}$ with $1 \leq q \leq r$ based on known theories in algebraic geometry and commutative algebra.
More concretely, we propose an algorithm (Algorithm (I)) for arbitrary varieties, and an algorithm (Algorithm (I\hspace{-.1em}I)) for complete intersections.
%
Our main theorem for Algorithm (I) is the following:

\begin{theo}\label{thm:main}
With notation as above, we fix $r$ the dimension of $\mathbf{P}^r$.
Given $1 \leq q \leq r$, characteristic $p$ and an algebraic variety $X \subset \mathbf{P}^r = \mathrm{Proj} (K[x_0, \ldots , x_r])$ with defining homogeneous polynomials $f_1, \ldots , f_t \in S = K [x_0, \ldots , x_r]$,
there exists an algorithm $(\mbox{\rm Algorithm (I)})$ for computing the rank of $F^{\ast,q} : H^q (X, \mathcal{O}_X) \longrightarrow H^q (X, \mathcal{O}_X)$ such that it terminates in 
\begin{equation}
O ( D^4 + \alpha^2 D^2 \mathrm{log} (p) ) \label{eq:mainThm}
\end{equation}
arithmetic operations over $K$,
under some assumptions.
Here $D$ is a certain invariant for $X$, and $\alpha$ depends on lifting morphisms between free resolutions for $S / \langle f_1, \ldots f_t \rangle$ and $S / \langle f_1^p, \ldots , f_t^p \rangle$.
\end{theo}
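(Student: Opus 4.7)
The plan is to reduce the computation of $F^{\ast,q}$ to finite-dimensional linear algebra over $K$ by representing $H^q(X, \mathcal{O}_X)$ through graded free resolutions of the homogeneous coordinate ring $R := S/I$ with $I = \langle f_1, \ldots, f_t\rangle$.

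First I would express $H^q(X, \mathcal{O}_X)$ via local cohomology at the irrelevant ideal $\mathfrak{m} = \langle x_0, \ldots, x_r\rangle$. For $1 \le q \le r$ there is a natural identification
\[
H^q(X, \mathcal{O}_X) \;\cong\; \bigl[\, H^{q+1}_{\mathfrak{m}}(R) \,\bigr]_{0}.
\]
A graded free resolution $F_\bullet \to R$ of finite length, provided by Hilbert's syzygy theorem, can be computed via Gröbner basis techniques, and the Čech complex on $x_0, \ldots, x_r$ applied to $F_\bullet$ delivers a finite-dimensional $K$-vector-space model of $H^{q+1}_{\mathfrak{m}}(R)$; taking degree-$0$ pieces yields a basis of $H^q(X, \mathcal{O}_X)$. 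Under the stated assumptions, the resolution lengths, Betti numbers, and generator degrees are all controlled by the invariant $D$, so the involved matrices have size $O(D)$.

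Next I would realize the Frobenius action as a composite of chain maps between two free resolutions. Writing $I^{[p]} := \langle f_1^p, \ldots, f_t^p\rangle$, the Frobenius $F_S\colon S\to S$, $x_i\mapsto x_i^p$, sends $I$ into $I^{[p]}$ and thereby induces a homogeneous ring homomorphism $\widetilde F \colon S/I \to S/I^{[p]}$; the natural surjection $\pi \colon S/I^{[p]} \twoheadrightarrow S/I$ then satisfies $F_R = \pi \circ \widetilde F$, where $F_R$ is the absolute Frobenius on $R$. By the comparison theorem for projective resolutions, both $\widetilde F$ and $\pi$ lift to explicit chain maps between the chosen free resolutions of $S/I$ and of $S/I^{[p]}$. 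Applying the Čech functor on $\mathfrak{m}$ to the composite chain map and passing to the degree-$0$ piece in cohomological degree $q+1$ produces a matrix representation of $F^{\ast,q}$, whose rank is then read off by Gaussian elimination.

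Algorithm (I) accordingly consists of: (a) computing graded free resolutions of $S/I$ and $S/I^{[p]}$; (b) constructing the chain-map lifts of $\widetilde F$ and $\pi$; (c) applying the Čech differentials and extracting the degree-$0$ matrix; (d) computing the rank of that matrix. The term $O(D^4)$ comes from the free-resolution computation and the subsequent linear algebra on matrices of size $O(D)$. The $\log(p)$ factor enters in step (b), where constructing the lift of $\widetilde F$ requires raising the $x_i$ to the $p$-th power: this is done by repeated squaring in $O(\log p)$ multiplications, each involving matrices whose size is governed by the lifting parameter $\alpha$, contributing the $\alpha^2 D^2 \log(p)$ term.

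The principal difficulty will lie in step (b): producing, in a computationally tractable form, chain-map lifts whose induced map on local cohomology provably coincides with $F^{\ast,q}$, together with uniform bounds on the degrees of these lifts in terms of $D$ and $\alpha$. Once this compatibility and the corresponding bounds are in place, the complexity estimate becomes a bookkeeping exercise combining resolution size, the cost of $p$-th-power computation by repeated squaring, and a final Gaussian elimination.
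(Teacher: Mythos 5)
Your overall strategy is the same as the paper's: realize $H^q(X,\mathcal{O}_X)$ via a graded free resolution of $S/I$ and the top cohomology of twisted line bundles (your local cohomology $[H^{q+1}_{\mathfrak m}(R)]_0$ is equivalent to the paper's $\Ker(H^r(\Phi_{r-q}))/\Im(H^r(\Phi_{r-q+1}))$), factor the absolute Frobenius on $R$ through $S/I_p$ as a $p$-th power map followed by the canonical surjection, lift the surjection to a chain map between the two resolutions, and read off a representation matrix. That is precisely the paper's Algorithm (I) and its Lemma 2.2.1.

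However, your complexity accounting is substantively wrong, and you also miss an optimization the paper makes central. First, the paper's estimate $O(D^4 + \alpha^2 D^2 \log p)$ is \emph{not} the cost of computing free resolutions, chain-map lifts, or even the ambient bases of $H^r(\mathbf{P}^r, \mathcal G_i)$: all of that is Step A (and part of (B-1)) and is explicitly excluded from the count, since it is exponential in general. The phrase ``under some assumptions'' in the theorem is exactly this exclusion. The $O(D^4)$ term actually comes from the linear algebra in Step B (computing a basis of $H^q(X,\mathcal{O}_X)$ via \cite[Prop.\ 3.5.1]{Kudo} and solving the linear systems in Step (B-3)), not from resolution computation as you claim. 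Second, the $\log p$ factor arises in Step (B-2) from raising the entries of the basis vectors $\mathbf b_i$ and scalars $b_{i,j}$ to the $p$-th power, not from constructing a chain-map lift of $\widetilde F$; no chain-map lift of $\widetilde F$ is ever computed. Third, and related: you propose to lift \emph{both} $\widetilde F$ and $\pi$ by the comparison theorem. The paper instead invokes Kunz's theorem (Theorem 2.3.2 and Lemma 2.3.4) to show that the Frobenius functor is exact over $S$, so a free resolution of $S/I_p$ is obtained \emph{for free} by $p$-th powering the entries of the resolution of $S/I$; the only comparison-theorem lift needed is $\pi = \psi$. Without this observation you lose both efficiency and the clean interpretation of $\alpha$ as depending only on the lifting matrices $C_i$ of $\psi_i$. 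Finally, if $K \neq \mathbb F_p$, your $\widetilde F$ induced by $x_i \mapsto x_i^p$ on $S$ while fixing scalars is the \emph{relative} Frobenius, not the absolute one; the paper's composite requires $h \mapsto h^p$, which also raises coefficients to the $p$-th power, so you should be careful to use the absolute Frobenius throughout.
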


Algorithm (I) is based upon the local cohomology-based method to compute the sheaf cohomology (\cite{Eisen}, \cite{Kudo} and \cite{Smith}), and roughly divided into the following two steps:
({\it Step A}) Compute (graded minimal) free resolutions and lifting morphisms.
({\it Step B}) Compute the basis of $H^q (X, \mathcal{O}_X)$, and the representation matrix of $F^{\ast,q}$.
For computing free resolutions, we shall apply Kunz's theorem \cite{Kunz} in our algorithm, which reduces total time for the computation, and allows us a reasonable assumption to analyze its complexity.

For complete intersections, one can simplify Algorithm (I).
As stated in the following theorem, the simplified algorithm (Algorithm (I\hspace{-.1em}I)) has a more precise evaluation on the complexity:
\begin{theo}\label{thm:main2}
With notation as above, we fix $r$ the dimension of $\mathbf{P}^r$.
Let $S = K [x_0, \ldots , x_r]$, and $X = V(f_1, \ldots , f_t)$ a complete intersection in $\mathbf{P}^r$ with an $S$-regular sequence $(f_1, \ldots , f_t) \in S^t$.
Assume $d_{j_1 \ldots j_{t-1}} := \sum_{k=1}^{t-1} \mathrm{deg} ( f_{j_k} ) \leq r$ for all $1 \leq j_1 < \cdots < j_{t-1} \leq t$ and $\mathrm{gcd}( f_i, f_j ) = 1$ in $S$ for $i \neq j$. 
Given the characteristic $p$ and $(f_1, \ldots , f_t)$, there exists an algorithm $(\mbox{\rm Algorithm (I\hspace{-.1em}I)})$ for computing the rank of $F^{\ast,q} : H^q (X, \mathcal{O}_X) \longrightarrow H^q (X, \mathcal{O}_X)$ with $q=\mathrm{dim}(X)=r-t$ such that it terminates in 
\begin{equation}
O ( t M (p-1) ) \label{eq:mainThm2}
\end{equation}
arithmetic operations over $S$, where $M(m)$ denotes the cost for computing the $m$-th power in $S$.
\end{theo}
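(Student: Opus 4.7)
The plan is to obtain Algorithm (II) by specializing Algorithm (I) to the setting where the Koszul complexes already furnish explicit graded minimal free resolutions of both $S/\langle f_1,\ldots,f_t\rangle$ and $S/\langle f_1^p,\ldots,f_t^p\rangle$. The coprimality hypothesis $\gcd(f_i,f_j)=1$ guarantees that $(f_1,\ldots,f_t)$ is a regular sequence in $S$, and by Kunz's theorem so is $(f_1^p,\ldots,f_t^p)$; thus \emph{Step A} of Algorithm (I) is replaced by writing down these closed-form resolutions at essentially no cost.

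I would then exhibit the lifting between the two Koszul complexes. Component-wise, the natural chain map compatible with the identity on $S/\langle f_1,\ldots,f_t\rangle$ decomposes as the direct sum of multiplication maps $f_{i_1}^{p-1}\cdots f_{i_k}^{p-1}$ on the rank-one summand indexed by $\{i_1<\cdots<i_k\}$. At the top homological degree $k=t$ this reduces to a single map, multiplication by $f_1^{p-1}\cdots f_t^{p-1}$. Under the identification of $H^{r-t}(X,\mathcal{O}_X)$ with $H^r(\mathbf{P}^r,\mathcal{O}(-d_1-\cdots-d_t))$, the induced action of $F^{\ast,q}$ is then multiplication by this single polynomial on the familiar monomial basis of negative-degree line-bundle cohomology on projective space.

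To justify that only the top summand contributes, I would invoke the spectral sequence of the Koszul resolution. Its relevant entries are $H^r(\mathbf{P}^r,\mathcal{O}(-d_I))$ for subsets $I\subseteq\{1,\ldots,t\}$, and the degree hypothesis $d_{j_1\cdots j_{t-1}}\leq r$ forces $H^r(\mathbf{P}^r,\mathcal{O}(-d_I))=0$ for every $|I|\leq t-1$. Consequently, $H^{r-t}(X,\mathcal{O}_X)$ is cut out entirely by the summand $\mathcal{O}(-d_1-\cdots-d_t)$, so no differentials from other terms need to be tracked and the description of $F^{\ast,q}$ above is complete.

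For the complexity: computing the $t$ polynomials $f_i^{p-1}$ costs $t\,M(p-1)$ operations in $S$; forming their product, reading off the representation matrix on the monomial basis, and computing the rank over $K$ contribute only lower-order terms under the stated hypotheses, and are absorbed into the bound $O(tM(p-1))$. The main technical obstacle I anticipate is a careful verification that the summands with $|I|<t$ contribute neither directly to the cohomology group nor, via spectral-sequence differentials, to the representation of $F^{\ast,q}$---this is precisely where both the degree bound and the coprimality assumption play their essential roles.
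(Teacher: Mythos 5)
Your proposal follows essentially the same route as the paper: Koszul resolutions in place of Step A, the componentwise lifting $\mathbf{e}_{j_1\ldots j_i}\mapsto(f_{j_1}\cdots f_{j_i})^{p-1}\mathbf{e}_{j_1\ldots j_i}$, vanishing of $H^r(\mathbf{P}^r,\mathcal{O}(-d_I))$ for $|I|\leq t-1$ under the degree hypothesis so that only the top Koszul term survives, and the resulting description of $F^{\ast,q}$ as multiplication by $(f_1\cdots f_t)^{p-1}$ on $H^r(\mathbf{P}^r,\mathcal{O}(-\sum_j d_j))$. Two small misattributions are worth flagging though: pairwise coprimality $\gcd(f_i,f_j)=1$ does \emph{not} by itself imply that $(f_1,\ldots,f_t)$ is an $S$-regular sequence (consider $x,\,y,\,x+y$ in $K[x,y]$), so regularity must be taken as the standing hypothesis it actually is rather than derived; and the paper does not route through Kunz's theorem in the complete-intersection case --- it proves directly (Lemma~\ref{lem:regular2}) that powers of an $S$-regular sequence are again $S$-regular, which is what makes the Koszul complex of the $f_i^p$ exact and keeps Algorithm~(I\hspace{-.1em}I) independent of the flatness machinery used for Algorithm~(I).
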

Since the strategy for Algorithm (I\hspace{-.1em}I) is theoretically the same as that for Algorithm (I), Algorithm (I\hspace{-.1em}I) can be viewed as a specific case of Algorithm (I).
By contrast, Algorithm (I\hspace{-.1em}I) has a computationally simplified and clarified structure, which makes the computation more efficient:
In practice Algorithm (I\hspace{-.1em}I) computes neither a free resolution nor a lifting homomorphism.
Specifically, for a given complete intersection $X=V( f_1, \ldots , f_t)$ defined by an $S$-regular sequence $(f_1, \ldots , f_t) \in S^t$ with certain conditions, we show that the rank of $F^{\ast, q} : H^q (X, \mathcal{O}_X) \longrightarrow H^q (X, \mathcal{O}_X)$ is completely determined by coefficients in $(f_1 \ldots f_t)^{p-1}$, where $q=\mathrm{dim}(X)=r-t$.
We prove this fact by constructing {\it Koszul complex of graded free modules} and certain special lifting homomorphisms.
Thanks to this fact, one also obtains a more precise evaluation on the complexity.
We also note that this simple method is viewed as a generalization of a standard method to compute $F^{\ast,q}$ for an elliptic curve in $\mathbf{P}^2$ (or a hypersurface in $\mathbf{P}^r$) with $q=\mathrm{dim}(X)=r-1$.

We demonstrate the correctness of our algorithms by computing some examples, one of which is $X_0(23)$, the (classical) modular curve of level $23$.
Our computational results for $X_0 (23)$ coincide with theoretical results computed by Yui's method \cite{Yui}.
We also examine efficiency of Algorithm (I).
As stated in Theorem \ref{thm:main}, under some assumptions, Algorithm (I) is proved to be performed in polynomial time with respect to $D$, $\alpha$ and $p$.
In particular, Algorithm (I) terminates in $O (\alpha^2 \mathrm{log}(p) )$ when $D$ is fixed.
For $X_0 (23)$ with $D=7$, we confirm, with our implementation over Magma \cite{Magma}, \cite{MagmaHP},
that Algorithm (I) performs in the estimated upper bound \eqref{eq:mainThm}.
With our algorithms and computational examples, one can compute representation matrices for the Frobenius action algorithmically for more general objects, which shall provide further theoretical/computational results.
(For instance, the author and Harashita already obtained theoretically new results on superspecial curves of genus $4$ by applying our algorithm for complete intersections, see \cite{KH16} and \cite{KH17-2}.)

The rest of this paper is organized as follows:
Section 2 is devoted to a brief review of a basic idea to compute representation matrices for the Frobenius action to the cohomology groups of algebraic varieties.
In addition, we review some known facts on the Frobenius functor for free modules, which are useful to make our (first) algorithm in Section 3 efficient and to evaluate its complexity.  
In Section 3, we prove Theorem \ref{thm:main}; we present an explicit algorithm to compute representation matrices for the action of Frobenius for general algebraic varieties, and estimate its complexity.
In Section 4, we prove Theorem \ref{thm:main2}; we give a new computational method for a complete intersection as a simplified version of our (first) algorithm proposed in Section 3.
Section 5 shows computational examples and experimental results obtained from our implementation over Magma.
In Section 6, we give some concluding remarks.

\section*{Acknowledgments}
The author thanks Allan Steel for helpful advice and discussion on computing lifting homomorphisms, and giving an efficient implementation of the division algorithm over free modules.
The author is also grateful Janko Boehm, Wolfram Decker, Shushi Harashita, and Gerhart Pfister for helpful advice and discussion on computing free resolutions for $S / I_p$.

\paragraph{Notation}
Given a graded module $M$ and an integer $\ell$, we denote by $M ( \ell )$ its $\ell$-twist given by $M (\ell)_t = M_{\ell + t}$.
In this paper, a homomorphism of graded $S$-modules means a graded homomorphism \textit{of degree zero} of graded $S$-modules.
Specifically we say that a graded homomorphism 
\[
\varphi : \bigoplus_{j = 1}^{t} S(-d_j) \longrightarrow \bigoplus_{j = 1}^{t^{\prime}} S(- d_j^{\prime}) \ ; \ {\bf v} \mapsto {\bf v} A
\]
is \textit{of degree zero} if each $( k, \ell )$-entry $g_{k, \ell} \in S$ of the representation matrix $A = \left( g_{k, \ell} \right)_{k, \ell}$ is homogeneous of degree $d_k - d_{\ell}^{\prime}$.

\section{Preliminaries}\label{sec:application}
In this section, we review a basic idea to compute representation matrices for the action of Frobenius to the cohomology groups of algebraic varieties.
The idea is given in \cite[Chapter 5]{Kudo}.

\subsection{Frobenius morphisms on schemes}
\begin{defin}[Absolute Frobenius]
Let $X$ be a scheme over $\mathbb{F}_q$, and $X \rightarrow \mathrm{Spec} (\mathbb{F}_q)$ its structure morphism.
The {\it absolute Frobenius} on $X$ is a morphism $F_{\mathrm{abs}} : X \rightarrow X$ with the identity map on $X$ and $a \mapsto a^p$ on sections.
\end{defin}

\begin{defin}[Relative Frobenius]\label{def:relative}
Let $X$ be a scheme over $\mathbb{F}_q$, and $X \rightarrow \mathrm{Spec} (\mathbb{F}_q)$ its structure morphism.
We define $X^{(p)}$ as the fiber product, and the {\it relative Frobenius} on $X$ as the induced morphism $F_X : X \rightarrow X^{(p)} $ in the following commutative diagram:
$$\xymatrix{
X \ar@(r,ul)[rrd]^{F_{\mathrm{abs}}} \ar[rd] \ar@(d,dr)[rdd] & & \\
 & X^{(p)} \ar[r] \ar[d] & X \ar[d] \\
& \mathrm{Spec} (\mathbb{F}_q) \ar[r]^{\cong} & \mathrm{Spec} (\mathbb{F}_q)
}$$
\end{defin}

\subsection{Our strategy to compute the Frobenius action}\label{sec_Frob}
Recall from Section \ref{sec:Intro} that $K$ is a perfect field of characteristic $p$.
Given homogeneous polynomials $f_1, \ldots , f_t \in S = K [ x_0, \ldots , x_r ]$, we denote by $V ( f_1, \ldots , f_t )$ the locus of the zeros in the projective $r$-space $\mathbf{P}^r = {\rm Proj} ( S )$ of the system of $f_1, \ldots , f_t$.
Put $X := V ( f_1, \ldots , f_t )$. 
Let $F:=F_{\mathrm{abs}}$ denote the absolute Frobenius on $X$.
Let $X^{(p)}$ be the scheme given in Definition \ref{def:relative}, and $F_X : X \rightarrow X^{(p)}$ the relative Frobenius.
The absolute (resp.\ relative) Frobenius induces the map $F^{\ast, i} : H^i (X, \mathcal{O}_X) \longrightarrow H^i (X, \mathcal{O}_X)$ (resp.\ $(F_X)^{\ast, i} : H^i (X^{(p)}, \mathcal{O}_{X^{(p)}}) \longrightarrow H^i (X, \mathcal{O}_X)$).
Computing $F^{\ast,i}$ or $(F_X)^{\ast, i}$ is important since it gives a tool for classifying algebraic varieties over $K$.
The absolute Frobenius $F : X \rightarrow X$ induces the Frobenius action $H^q ( X, {\cal O}_X ) \longrightarrow H^q ( X, {\cal O}_X )$, denoted by $F^{\ast, q}$.


The purpose of this paper is to present a method for computing $F^{\ast,q} : H^q ( X, {\cal O}_X ) \longrightarrow H^q ( X, {\cal O}_X )$ for $1 \leq q \leq r-1$ algorithmically.
Given a concrete $X$ and its covering, we can {\it theoretically} compute $H^i (X, \mathcal{O}_X)$ by \v{C}ech cohomology.
However, the representation of elements of $H^i (X, \mathcal{O}_X)$ depends on one's choice of an open covering $\mathcal{U} = \{ U_i \}_{i \in I}$ for $X$.
Moreover, one needs to represent the image of each basis element by the Frobenius as a linear combination of the same basis.
To compute such a representation algorithmically, we shall construct two morphisms, the composition of which coincides with $F$.
By this decomposition, we reduce the computation of $F^{\ast,q}$ into that of free resolutions and lifting morphisms of graded modules.
In what follows, we describe our strategy for computing the representation matrix for the Frobenius action $F^{\ast,q}$ to the $q$-th cohomology group $H^q (X, \mathcal{O}_X)$.
This framework has been proposed in \cite[Chapter 5]{Kudo} (but not neither written as a concrete algorithm nor precisely analyzed).

\paragraph{{\it The idea of our strategy:}}
We construct the following commutative diagram of morphisms:
$$\xymatrix{
& X_p \ar@(r,u)[rdd] & \\
X \ar[ru] \ar@(r,ul)[rrd]^{F_{\mathrm{abs}}} \ar[rd] \ar@(d,dr)[rdd] & & \\
 & X^{(p)} \ar[r] \ar[d] & X \ar[d] \\
& \mathrm{Spec} (\mathbb{F}_q) \ar[r]^{\cong} & \mathrm{Spec} (\mathbb{F}_q)
}$$
where we set $X_p:= V(f_1^p, \ldots , f_t^p)$, and $X \rightarrow X_p$ and $X_p \rightarrow X$ are certain morphisms defined below.
Instead of the composition of $X \rightarrow X^{(p)}$ and $X^{(p)} \rightarrow X$, we use that of $X \rightarrow X_p$ and $X_p \rightarrow X$ for computing the representation matrix for $F^{\ast, i}$.

Let $\mathcal{I}:= I^{\sim}$ and $\mathcal{I}_p :=(I_p)^{\sim}$ be the ideal sheaves associated to the homogeneous ideals $I := \langle f_1, \ldots , f_t \rangle_S$ and $I_p := \langle f_1^p, \ldots , f_t^p \rangle_S$, respectively.
Let $F_1$ denote the absolute Frobenius morphism on $\mathbf{P}^r$.
We then have the following commutative diagram:
$$\xymatrix{
& \qquad H^q (X, \mathcal{O}_X) \ar[ld] \ar[rr]^{\cong} \ar[d]^{\left(F_1 |_{X_{p}}\right)^{\ast,q}} \ar@(l,l)[dd]_{F^{\ast, q}} & & H^{q+1} ( \mathbf{P}^r, {\cal I} ) \ar[d]^{F_1^{\ast,q}}  \\
H^q (X^{(p)}, \mathcal{O}_{X^{(p)}}) \ar[rd] & \qquad H^q (X_p, \mathcal{O}_{X_p}) \ar[rr]^{\cong} \ar[d] & & H^{q+1} ( \mathbf{P}^r, {\cal I}_p ) \ar[d] \\
& \qquad H^q (X, \mathcal{O}_X) \ar[rr]^{\cong} & & H^{q+1} ( \mathbf{P}^r, {\cal I} )
}$$
where the morphism $H^q ( X_{p}, {\cal O}_{X_{p}} ) \longrightarrow H^q ( X, {\cal O}_X  )$ (resp. $H^{q+1} ( \mathbf{P}^r, {\cal I}_p ) \longrightarrow H^{q+1} ( \mathbf{P}^r, {\cal I} )$) is induced by the morphism
from ${\cal O}_{X_{p}}$ to ${\cal O}_X$ (resp. ${\cal I}_p$ to ${\cal I}$) corresponding to the homomorphism $S / I_p \rightarrow S / I$ (resp. $I_p \rightarrow I$).
Let $\psi$ denote the above $S/ I_p \rightarrow S/ I$, given by $h + I_p \mapsto h + I$.
The finitely generated graded $S$-module $S / I$ has a (minimal) free resolution of the form
$$\xymatrix{
0 \ar[r] & \bigoplus_{j=1}^{t_{r+1}} S \left( - d_j^{( r+1 )} \right) \ar[r]^(0.7){\varphi_{r+1}} &  \cdots \ar[r]^(0.3){\varphi_{2}} & \bigoplus_{j=1}^{t_{1}} S \left( - d_j^{( 1 )} \right) \ar[r]^(0.7){\varphi_1} & S \ar[r]^{\varphi_0} & S/ I  \ar[r]    & 0.           
}$$
We denote by $\left( g_{k,\ell}^{(i)} \right)_{k, \ell}$ the representation matrix of $\varphi_i$.
As we will see in the next subsection (Subsection \ref{sec_Frob_func}), the module $S / I_p$ has a graded free resolution of the form
$$\xymatrix{
0 \ar[r] & \bigoplus_{j=1}^{t_{r+1}} S \left( - d_j^{( r+1 )} p \right) \ar[r]^(0.7){\varphi_{r+1}^{(p)}} &  \cdots \ar[r]^(0.3){\varphi_{2}^{(p)}} & \bigoplus_{j=1}^{t_{1}} S \left( - d_j^{( 1 )} p \right) \ar[r]^(0.7){\varphi_1^{(p)}} & S \ar[r]^{\varphi_0^{(p)}} & S/ I_p  \ar[r]    & 0,           
}$$
where each $\varphi_{i}^{(p)}$ is given by the matrix with entries equal to the $p$-th powers of the entries of the matrix
for $\varphi_i$.
For the above free resolutions of $S/ I_p$ and $S/ I$, there exist $\psi_i$ for $0 \leq i \leq r+1$ such that the diagram
$$\xymatrix{
0 \ar[r] & \bigoplus_{j=1}^{t_{r+1}} S \left( - d_j^{( r+1 )} p \right)                 \ar[d]^{\psi_{r+1}} \ar[r]^(0.7){\varphi_{r+1}^{(p)}} &  \cdots \ar[r]^(0.3){\varphi_{2}^{(p)}} & \bigoplus_{j=1}^{t_{1}} S \left( - d_j^{( 1 )} p \right)          \ar[d]^{\psi_{1}} \ar[r]^(0.7){\varphi_1^{(p)}} & S \ar[r]^{\varphi_0^{(p)}} \ar[d]^{\psi_0} & S/ I_p    \ar[r]                  \ar[d]^{\psi}  & 0            \\
0 \ar[r] & \bigoplus_{j=1}^{t_{r+1}} S \left( - d_j^{( r+1 )} \right)                 \ar[r]^(0.7){\varphi_{r+1}} &  \cdots \ar[r]^(0.3){\varphi_{2}} & \bigoplus_{j=1}^{t_{1}} S \left( - d_j^{( 1 )} \right)         \ar[r]^(0.7){\varphi_1} & S    \ar[r]^{\varphi_0} & S/ I    \ar[r]    & 0            
}$$
commutes.
For a computation method of each morphism $\psi_i$, see e.g., \cite[Chapter 15]{Eisenbud_c}.
Let $C_i$ be the representation matrix of $\psi_i$.
Let $\Psi:= \psi^{\sim}$ denote the sheaf morphisms $\mathcal{O}_{\mathbf{P}^r} / \mathcal{I}_p \rightarrow \mathcal{O}_{\mathbf{P}^r} / \mathcal{I}$ induced by $\psi$.
Put $\Psi_i := \psi_i^{\sim}$,
\begin{eqnarray}
\mathcal{G}_{i} & := & \bigoplus_{j=1}^{t_{i}}\mathcal{O}_{\mathbf{P}^r}\left( - d_{j}^{( i )}\right), \quad 
\Phi_i := \varphi_i^{\sim} \mbox{ and} \nonumber \\
\mathcal{G}_{i}^{(p)} & := & \bigoplus_{j=1}^{t_{i}}\mathcal{O}_{\mathbf{P}^r}\left( - d_{j}^{( i )} p \right), \quad 
\Phi_i^{(p)} := \left( \varphi_i^{(p)} \right)^{\sim} \nonumber
\end{eqnarray}
for $0 \leq i \leq r + 1$, where $\varphi_i^{\sim}$, $(\varphi_i^{(p)})^{\sim}$ and $\psi_i^{\sim}$ denote the sheaf morphisms $\mathcal{G}_i \rightarrow \mathcal{G}_{i-1}$, $\mathcal{G}_i^{(p)} \rightarrow \mathcal{G}_{i-1}^{(p)}$ and $\mathcal{G}_i^{(p)} \rightarrow \mathcal{G}_i$ induced by $\varphi_i$, $\varphi_i^{(p)}$ and $\psi_i$, respectively.
The above commutative diagram of $S$-modules induces the following commutative diagram of the cohomology groups:
$$\xymatrix{
0 \ar[r] & H^r ( \mathbf{P}^r, \mathcal{G}_{r+1}^{(p)})                 \ar[d]^{H^r (\Psi_{r+1})} \ar[rr]^(0.6){H^r( \Phi_{r+1}^{(p)} ) } &  & \cdots \ar[rr]^(0.4){H^r (\Phi_{1}^{(p)}) } & & H^r (\mathbf{P}^r, \mathcal{G}_0^{(p)})          \ar[d]^{H^r (\Psi_{0})} \ar[rr]^{H^r (\Phi_0^{(p)})} & & H^r (\mathbf{P}^r, \mathcal{O}_{\mathbf{P}^r} / \mathcal{I}_p)                  \ar[d]^{H^r (\Psi)} \ar[r]    & 0            \\
0 \ar[r] & H^r ( \mathbf{P}^r, \mathcal{G}_{r+1})                 \ar[rr]^(0.6){H^r (\Phi_{r+1} )} &  & \cdots \ar[rr]^(0.4){H^r (\Phi_{1})} & & H^r (\mathbf{P}^r, \mathcal{G}_0)       \ar[rr]^{H^r(\Phi_0)} & & H^r (\mathbf{P}^r, \mathcal{O}_{\mathbf{P}^r} / \mathcal{I})     \ar[r]    & 0            
}$$
One can check that each horizontal sequence is a complex, since the functor $H^r (\cdot)$ is right exact.

\begin{lem}\label{lem:main}
With notation as above, we have the following commutative diagram:
$$\xymatrix{
\qquad H^q (X, \mathcal{O}_X) \ar[r]^{\cong} \ar[d]^{\left(F_1 |_{X_{p}}\right)^{\ast,q}} \ar@(l,l)[dd]_{F^{\ast, q}} & H^{q+1} ( \mathbf{P}^r, {\cal I} ) \ar[r]^(0.3){\cong}  \ar[d]^{F_1^{\ast,q}} & \Ker \left( H^r( \Phi_{r-q}) \right) / \Im \left( H^r ( \Phi_{r-q+1} ) \right) \ar[d]^{\mbox{\rm power } p}  \\
\qquad H^q (X_p, \mathcal{O}_{X_p}) \ar[r]^{\cong} \ar[d] & H^{q+1} ( \mathbf{P}^r, {\cal I}_p ) \ar[r]^(0.3){\cong} \ar[d] &  \Ker \left( H^r ( \Phi_{r-q}^{(p)} ) \right) / \Im \left( H^r (  \Phi_{r-q+1}^{(p)} ) \right) \ar[d]^{H^r (\Psi_{r-q})}\\
\qquad H^q (X, \mathcal{O}_X) \ar[r]^{\cong} & H^{q+1} ( \mathbf{P}^r, {\cal I} ) \ar[r]^(0.3){\cong} & \Ker \left( H^r \left( \Phi_{r-q} \right) \right) / \Im \left( H^r \left( \Phi_{r-q+1} \right) \right)
}$$
\end{lem}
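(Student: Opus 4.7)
My plan is to decompose the diagram into three vertical slices and verify each separately, exploiting naturality at every step.

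First, I would establish the leftmost pair of horizontal isomorphisms $H^q(X, \mathcal{O}_X) \cong H^{q+1}(\mathbf{P}^r, \mathcal{I})$ (and its counterpart for $X_p$) from the long exact sequences of cohomology attached to $0 \to \mathcal{I} \to \mathcal{O}_{\mathbf{P}^r} \to \mathcal{O}_X \to 0$ and $0 \to \mathcal{I}_p \to \mathcal{O}_{\mathbf{P}^r} \to \mathcal{O}_{X_p} \to 0$, combined with the standard vanishing $H^i(\mathbf{P}^r, \mathcal{O}_{\mathbf{P}^r}) = 0$ for $1 \leq i \leq r-1$. Commutativity of the left squares of the lemma is then immediate from the naturality of the connecting homomorphism under the morphism of short exact sequences induced by $\psi$ and by $F_1$.

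Next, for the second pair of isomorphisms $H^{q+1}(\mathbf{P}^r, \mathcal{I}) \cong \Ker(H^r(\Phi_{r-q}))/\Im(H^r(\Phi_{r-q+1}))$, I would split the sheafified resolution into short exact sequences $0 \to \mathcal{K}_i \to \mathcal{G}_i \to \mathcal{K}_{i-1} \to 0$ (with $\mathcal{K}_0 = \mathcal{I}$) and cascade the associated long exact sequences. The key input is Serre's vanishing $H^i(\mathbf{P}^r, \mathcal{O}_{\mathbf{P}^r}(n)) = 0$ for $i \notin \{0, r\}$, which forces the connecting maps to be isomorphisms at every intermediate stage and reduces the computation to a single quotient at the top cohomology of the resolution. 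The same argument with $\mathcal{I}_p$ in place of $\mathcal{I}$ handles the middle row, and naturality of the cascade under the lifting morphisms $\Psi_i$ (and under $F_1^\ast$) yields commutativity of the middle squares.

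The third ingredient is the identification of the rightmost top-to-middle vertical map with the ``power $p$'' map. Since the absolute Frobenius $F_1$ acts on local sections by $s \mapsto s^p$, the induced map $F_1^{\ast, r} : H^r(\mathbf{P}^r, \mathcal{O}_{\mathbf{P}^r}(-d)) \to H^r(\mathbf{P}^r, \mathcal{O}_{\mathbf{P}^r}(-dp))$ is the $p$-th power on \v{C}ech cocycle representatives, consistently with the shifts $\mathcal{O}(-d_j^{(i)}) \to \mathcal{O}(-d_j^{(i)} p)$ defining the resolution of $S/I_p$ from that of $S/I$. Composing this with $H^r(\Psi_{r-q})$ then reproduces $F^{\ast,q}$ via the factorization $X \to X_p \to X$ recalled in the excerpt, which identifies the outer composite in the lemma.

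The main obstacle will be the careful bookkeeping in the second step: one must verify that the staircase of short exact sequences and their connecting maps transports both $F_1^\ast$ and the power $p$ action faithfully through all intermediate cohomological shifts, and compatibly with the $\psi_i$. This is a conceptually straightforward but notationally heavy diagram chase, and is the technical heart of the lemma.
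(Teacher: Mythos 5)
Your proposal is correct and follows what is clearly the intended approach: the paper itself gives no proof of this lemma, deferring instead to \cite[Section 5]{Kudo}, but the three-column decomposition (ideal-sheaf sequence, cascade of short exact sequences from the sheafified resolution, identification of the Frobenius pullback with the power-$p$ map) is precisely the unwinding of the commutative diagrams already displayed in Subsection~2.2. Two small points to make explicit in a full write-up: for the isomorphism $H^q(X,\mathcal{O}_X)\cong H^{q+1}(\mathbf{P}^r,\mathcal{I})$ at $q=r-1$ you also need $H^r(\mathbf{P}^r,\mathcal{O}_{\mathbf{P}^r})=0$, which is not covered by the range $1\le i\le r-1$ you quote; and when identifying the composite of the power-$p$ map with $H^r(\Psi_{r-q})$ as $F^{\ast,q}$, you should state which arrow is induced by the closed immersion $X\hookrightarrow X_p$ (coming from $I_p\subseteq I$) and which by $F_1|_{X_p}\colon X_p\to X$, since both directions appear and the factorization of $F$ through $X_p$ is the crux of the outer commutativity.
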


\begin{proof}
See \cite[Section 5]{Kudo}.
\end{proof}
Here the representation matrix of $F^{\ast,q}$ is computed as follows.
\begin{enumerate}
	\item Compute a basis ${\cal B} = \{ \mathbf{b}_1, \ldots , \mathbf{b}_g \}$ of $H^q ( X, \mathcal{O}_{X} ) \cong \Ker \left( H^r( \Phi_{r-q}) \right) / \Im \left( H^r ( \Phi_{r-q+1} ) \right)$.
	\item Compute the vector, each entry of which is the $p$-th power of each entry of $\mathbf{b}_i$ for $1 \leq i \leq g$.
	We write the vector $\mathbf{b}_i^{(p)}$ for $1 \leq i \leq g$.
	\item Compute $\mathbf{b}_i^{(p)} \cdot {}^t C_{r-q}$ for $1 \leq i \leq g$, and the representation matrix of $F^{\ast,q}$ via the basis $\mathcal{B}$. 
\end{enumerate}

\begin{rem}\label{rem:basis}
\begin{enumerate}
	\item For integers $t$ and $m_j$ with $1 \leq j \leq t$, the $r$-th cohomology group
	\[
	H^r \left(\mathbf{P}^r, \bigoplus_{j=1}^t \mathcal{O}_{\mathbf{P}^r} (m_j) \right) \cong \bigoplus_{j=1}^t H^r (\mathbf{P}^r, \mathcal{O}_{\mathbf{P}^r} (m_j))
	\]
	is a finite-dimensional $K$-vector space with the basis
	\[
	\{ x_0^{\ell_0} \cdots x_r^{\ell_r} \mathbf{e}_j : 1 \leq j \leq t,\ (\ell_0, \ldots , \ell_r ) \in (\mathbb{Z}_{<0})^{r+1},\ \ell_0 + \cdots + \ell_r = -m_j \},
	\]
	where $\mathbf{e}_j$ denotes the vector with a $1$ in the $j$-th coordinate and $0$'s elsewhere.
	With this fact, the basis of $H^q ( X, \mathcal{O}_{X} ) \cong \Ker \left( H^r( \Phi_{r-q}) \right) / \Im \left( H^r ( \Phi_{r-q+1} ) \right)$ is given as follows:
	$H^r (\mathbf{P}^r, \mathcal{G}_{r-q})$ has a $K$-basis of the form
	\[
	\{ \mathbf{v}_1, \ldots , \mathbf{v}_{g^{\prime}} \} = \{ x_0^{\ell_0} \cdots x_r^{\ell_r} \mathbf{e}_j : 1 \leq j \leq t_{r-q},\ (\ell_0, \ldots , \ell_r ) \in (\mathbb{Z}_{<0})^{r+1},\ \ell_0 + \cdots  + \ell_r = -d_j^{(r-q)} \}.
	\]
	One has a $K$-basis of $\Ker \left( H^r( \Phi_{r-q}) \right) / \Im \left( H^r ( \Phi_{r-q+1} ) \right)$ as
	\[
	\left[
	\begin{array}{c}
	\mathbf{b}_1 \\
	\vdots \\
	\mathbf{b}_g 
	\end{array}
	\right]
	= B \cdot \left[ 
	\begin{array}{c}
	\mathbf{v}_1 \\
	\vdots \\
	\mathbf{v}_{g^{\prime}}
	\end{array}
	\right]
	\]
	for some $\mathrm{dim}_K H^q (X, \mathcal{O}_X) \times \mathrm{dim}_K H^r (\mathbf{P}^r, \mathcal{G}_{r-q})$ matrix $B$ over $K$.
	For the computation of $B$, see \cite[Section 3]{Kudo}.
	\item If the sequence $( f_1, \ldots , f_t)$ is $S$-regular, i.e., $X = V ( f_1, \ldots , f_t)$ is a complete intersection, then one can compute lifting maps between free $S$-modules in free resolutions for $S / I$ and $S / I_p$ via the \emph{Koszul complex}.
Moreover in such a case with $q = \mathrm{dim} (X)$, $F^{\ast,q}$ is determined by coefficients in $(f_1 \cdots f_t)^{p-1}$.
We will see that in Section \ref{sec:HWgeneral} (see also \cite[Appendix B]{KH16}).
	\item As we will see in the next subsection (Subsection \ref{sec_Frob_func}), one gets a free resolution for $S / I_p$ from a free resolution for $S / I$ with $p$-th power multiplications. 
\end{enumerate}
\end{rem}

\subsection{Frobenius functor for the category of modules}\label{sec_Frob_func} 

This subsection is devoted to properties of the Frobenius functor for the category of modules, see e.g., \cite{Mil} for more details.
Let $R$ be a ring of positive characteristic $p$, $M$ an $R$-module, and $f$ the Frobenius endomorphism on $R$.
We denote by ${}^f \! M$ the left $R$-module structure defined on $M$ by restriction of scalars via $f$, that is, for $r \in R$ and $m \in M$, we define $r \cdot m := r^p m$.

The Frobenius functor is defined as a functor from the category of $R$-modules to itself, and it is defined by
$F_R (M):= M \otimes_R {}^f \! R$.
Here, we enumerate in the following lemma some fundamental properties of the Frobenius functor $F_R (\cdot)$.

\begin{lem}\label{lem:Frob_func}
Let $R$ be a ring of positive characteristic $p$, $f$ the Frobenius endomorphism on $R$, and $F_R (\cdot)$ the Frobenius functor from the category of $R$-modules to itself.
\begin{enumerate}
	\item[$(1)$] $F_R ( \cdot )$ is right exact. 
	\item[$(2)$] $F_R ( R) = R \otimes_R {}^f \! R \simeq {}^f \! R \simeq R$ as $R$-modules via $a \otimes b \mapsto a \cdot b = a^p b$.
	For free modules, one has $F_R ( R^t ) = ( \bigoplus_{i=1}^t R ) \otimes_R {}^f \! R \simeq ( {}^f \! R )^t  \simeq R^t$ via $( a_1, \ldots , a_t ) \otimes b \mapsto ( a_1 \cdot b, \ldots , a_t \cdot b) = (a_1^p b, \ldots , a_t^p b)$.
	\item[$(3)$] For any ideal $J \subset R$, we have $F_R (R / J) = ( R / J ) \otimes_R {}^f \! R \simeq R / J_p$, where $J_p$ denotes the ideal generated by the $p$-th powers of elements of $J$.
	\item[$(4)$] Let $\varphi : R^t \rightarrow R^s$ be a homomorphism of $R$-modules, and $( r_{i,j} )_{i,j}$ a $t \times s$ matrix which represents $\varphi$ via standard bases.
	Then $F_R ( \varphi ) : R^t \rightarrow R^s$ is given by the matrix with entries equal to the $p$-th powers of the entries of the matrix $( r_{i,j} )_{i,j}$.
 \end{enumerate}
\end{lem}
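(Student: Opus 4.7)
The plan is to verify the four claims by reducing each to standard facts about tensor products, exploiting the structure of ${}^f R$ as an $(R,R)$-bimodule with the usual right action and the twisted left action $r \cdot x = r^p x$. Under this bimodule structure, the defining relation of $M \otimes_R {}^f R$ reads $mr \otimes x = m \otimes r^p x$, which is the single computational fact that drives everything that follows.

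For (1), I would invoke the general fact that $(-) \otimes_R N$ is right exact for any left $R$-module $N$; specializing to $N = {}^f R$ gives the result immediately. For (2), I would check that the assignment $a \otimes b \mapsto a^p b$ descends to a well-defined map $R \otimes_R {}^f R \to R$: indeed $(ar) \otimes b$ and $a \otimes r^p b$ both map to $a^p r^p b$. This map is $R$-linear with respect to the right $R$-action inherited from ${}^f R$, and $b \mapsto 1 \otimes b$ is a two-sided inverse. The statement for $R^t$ then follows from compatibility of the tensor product with finite direct sums, yielding the explicit formula $(a_1, \ldots, a_t) \otimes b \mapsto (a_1^p b, \ldots, a_t^p b)$.

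For (3), I would apply $F_R$ to the short exact sequence $0 \to J \to R \to R/J \to 0$ and use (1) to obtain an exact sequence $F_R(J) \to F_R(R) \to F_R(R/J) \to 0$. Identifying $F_R(R)$ with $R$ via (2), the image of the left-hand map consists of all elements of the form $j^p r$ with $j \in J$ and $r \in R$, which by definition generate the ideal $J_p$. Hence $F_R(R/J) \cong R/J_p$. For (4), I would exploit naturality of the isomorphism in (2): $F_R(\varphi)$ equals $\varphi \otimes \mathrm{id}_{{}^f R}$, and tracing a standard basis vector $e_i \otimes 1 \in R^t \otimes_R {}^f R$ through the isomorphism of (2) gives $F_R(\varphi)(e_i) = \sum_j r_{i,j}^p e_j$, which is the claim.

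The only genuine subtlety is the well-definedness check in (2), i.e., verifying that the prescription $a \otimes b \mapsto a^p b$ respects the twisted tensor relation; once this is in hand, parts (1), (3), and (4) reduce to routine diagram-chasing and basis-tracking with no further obstacles.
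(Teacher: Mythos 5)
Your proposal is correct and follows essentially the same route as the paper: right exactness of the tensor product for (1), the explicit well-defined isomorphism $a \otimes b \mapsto a^p b$ for (2) (which the paper dismisses as straightforward but you rightly identify as the one point needing a check against the twisted relation $ar \otimes b = a \otimes r^p b$), right exactness applied to $0 \to J \to R \to R/J \to 0$ for (3), and tracing standard basis vectors through $\varphi \otimes \mathrm{id}_{{}^f\!R}$ for (4). No gaps.
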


\begin{proof}
(1) Since tensor product is right exact, the claim holds.
(2) Straightforward.
(3) The claim follows from $F_R (R / J) = ( R / J ) \otimes_R {}^f \! R \simeq J \cdot {}^f \! R$, where in this case $J \cdot {}^f \! R := \langle a^p x : a \in J,\ x \in {}^f \! R \rangle_R = J_p$.
(4) Let $\mathbf{e}_i$ be an element of the standard basis of $R^t$.
By (2), we identify $\mathbf{e}_i$ with $\mathbf{e}_i \otimes 1$, and it follows that
$F_R (\varphi) (\mathbf{e}_i) = ( \varphi \otimes \mathrm{id}_{{}^f \! R}) (\mathbf{e}_i \otimes 1) = ( \sum_{j=1}^s r_{i,j} \mathbf{e}_j ) \otimes 1 = \sum_{j=1}^s ( r_{i,j} \mathbf{e}_j \otimes 1) = \sum_{j=1}^s ( r_{i,j} \cdot 1 ) \mathbf{e}_j = \sum_{j=1}^s r_{i,j}^p \mathbf{e}_j$.
\end{proof}

\begin{thm}[Kunz's Theorem, \cite{Kunz}, Theorems 2.1 and 3.3]\label{thm:Kunz}
Let $R$ be a local ring of characteristic $p$.
Then $R$ is a regular ring if and only if $f^n$ is flat for all $n > 0$, where $f$ denotes the Frobenius endomorphism on $R$. 
\end{thm}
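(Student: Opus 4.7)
The plan is to prove the two implications separately, with the converse being substantially deeper than the forward direction.

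For the forward direction ($R$ regular $\Rightarrow$ $f^n$ flat for all $n$), I would pass to the $\mathfrak{m}$-adic completion $\hat R$ via the faithful flatness of $R \to \hat R$, and then invoke the Cohen structure theorem. In the equicharacteristic case with perfect residue field $k$, one has $\hat R \cong k[[x_1, \ldots, x_d]]$ with $d = \dim R$, and the module ${}^{f^n}\hat R$ is visibly free over $\hat R$ with basis given by the monomials $\{x_1^{a_1} \cdots x_d^{a_d} : 0 \leq a_i < p^n\}$; this follows from the unique expansion of a power series against the subring $k[[x_1^{p^n}, \ldots, x_d^{p^n}]]$ (using $k^{p^n} = k$). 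The imperfect residue field and mixed characteristic cases are handled analogously, augmenting the basis using lifts of a $p$-basis of $k$ over $k^{p^n}$ and/or using a Cohen ring presentation of $\hat R$.

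For the converse direction, I would induct on $d = \dim R$. In the base case $d = 0$, $R$ is Artinian local, so flatness plus finite presentation forces ${}^f R$ to be free of some rank $n$ over $R$. Lemma \ref{lem:Frob_func}(3) identifies $k \otimes_R {}^f R$ with $R/\mathfrak{m}^{[p]}$ (as modules). A careful length/rank comparison --- matching $\ell_R({}^f R) = n \cdot \ell_R(R)$ against $\ell_R({}^f R)$ computed via the $\mathfrak{m}^{[p^k]}$-adic filtration --- ultimately forces $\mathfrak{m} = \mathfrak{m}^{[p]}$. Since $\mathfrak{m}^{[p]} \subseteq \mathfrak{m}^p \subseteq \mathfrak{m}^2$ for $p \geq 2$, Nakayama's lemma then yields $\mathfrak{m} = 0$, and $R$ is a field. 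For the inductive step ($d > 0$), I would find $x \in \mathfrak{m} \setminus \mathfrak{m}^2$ that is a non-zero-divisor, verify that the induced Frobenius $\bar f$ on $R/(x)$ remains flat, and apply the induction hypothesis to conclude that $R/(x)$ --- and therefore $R$ itself --- is regular.

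The main obstacle is the inductive step of the converse. Two technical points require care. First, the existence of such an $x$ demands $\mathrm{depth}\, R > 0$, i.e., that $\mathfrak{m}$ is not an associated prime of $R$; this is established via a socle argument that exploits the implication $\mathfrak{m} y = 0 \Rightarrow \mathfrak{m}^{[p]} y = 0$ together with the Frobenius flatness hypothesis and the base case. Second, descending flatness of $f$ on $R$ to flatness of $\bar f$ on $R/(x)$ requires tracking how $- \otimes_R {}^f R$ interacts with the short exact sequence $0 \to R \xrightarrow{x} R \to R/(x) \to 0$: one must identify the base change $(R/(x)) \otimes_R {}^f R$ with an appropriate module compatible with ${}^{\bar f}(R/(x))$, and verify the needed Tor-vanishing against $R/(x)$. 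Throughout, the interaction between the Frobenius functor and short exact sequences, as codified in Lemma \ref{lem:Frob_func}, is the central technical device.
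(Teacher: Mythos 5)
The paper does not prove this statement at all: it records Kunz's theorem as a black box, citing Theorems 2.1 and 3.3 of \cite{Kunz}, and immediately moves on to derive Corollary~\ref{cor:Kunz} and Lemma~\ref{lem:Frob_resolution} from it. So there is no proof in the paper against which to compare your argument; the theorem is imported, not re-proved.

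As an independent sketch of Kunz's original argument, the broad shape you describe (Cohen structure + a free $p^n$-power basis for the forward direction; a dimension induction for the converse) is reasonable, but the converse has a real gap at the base case. You assert that in the Artinian case flatness of $f$ ``plus finite presentation'' forces ${}^f R$ to be free of finite rank, but ${}^f R$ need not be a finitely generated $R$-module unless $R$ is $F$-finite --- already $R = k$ a field with $[k : k^p] = \infty$ shows ${}^f R$ can be infinite rank --- and Kunz's theorem holds without any $F$-finiteness hypothesis. Consequently the proposed length count $\ell_R({}^f R) = n\cdot \ell_R(R)$ is not available in general. Kunz's actual argument for the converse avoids this by comparing Hilbert--Samuel data: flatness of Frobenius forces $\ell(R/\mathfrak{m}^{[p^e]}) = p^{e\dim R}\cdot\ell(R/\mathfrak{m})$ for all $e$, and this numerical identity (a multiplicity-one statement) is then shown to characterize regularity. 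If you want an inductive approach instead, you would still need to replace the ``free of rank $n$'' step with a multiplicity or associated-graded comparison that does not presuppose finite generation of ${}^f R$.
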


Since regularity and flatness can be each checked locally, we have the following corollary.

\begin{cor}\label{cor:Kunz}
Let $R$ be a ring of positive characteristic $p$.
Then $R$ is a regular ring if and only if $f^n$ is flat for all $n > 0$, where $f$ denotes the Frobenius endomorphism on $R$. 
\end{cor}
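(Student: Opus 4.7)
The plan is to reduce the global statement to the local statement of Kunz's Theorem (Theorem \ref{thm:Kunz}) by exploiting the local nature of both regularity and flatness.

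First, I would record the two standard facts from commutative algebra that make the reduction possible. A commutative ring $R$ is regular precisely when the localization $R_{\mathfrak{p}}$ is a regular local ring for every prime (equivalently, every maximal) ideal $\mathfrak{p}$ of $R$; this is the definition of a regular ring in the non-Noetherian or non-local setting adopted here. Similarly, a ring homomorphism $\varphi : A \to B$ is flat if and only if, for every prime ideal $\mathfrak{q}$ of $B$, the induced local homomorphism $A_{\varphi^{-1}(\mathfrak{q})} \to B_{\mathfrak{q}}$ is flat.

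Next, I would observe that the Frobenius endomorphism is compatible with localization at any prime. For any prime $\mathfrak{p} \subset R$, we have $f^{-1}(\mathfrak{p}) = \mathfrak{p}$: if $a^p \in \mathfrak{p}$ then $a \in \mathfrak{p}$ since $\mathfrak{p}$ is prime, and conversely $a \in \mathfrak{p}$ clearly implies $a^p \in \mathfrak{p}$. Hence $f$ and its iterates $f^n$ descend to endomorphisms $(f^n)_{\mathfrak{p}} : R_{\mathfrak{p}} \to R_{\mathfrak{p}}$, and an immediate check shows that $(f^n)_{\mathfrak{p}}$ is exactly the $n$-th iterate of the Frobenius endomorphism of the local ring $R_{\mathfrak{p}}$ (which is a local ring of characteristic $p$).

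Combining these ingredients proves both implications of the corollary simultaneously: $R$ is regular iff each $R_{\mathfrak{p}}$ is regular (by the first fact) iff the Frobenius endomorphism of each $R_{\mathfrak{p}}$ has all its iterates flat (by Theorem \ref{thm:Kunz}) iff $f^n : R \to R$ is flat for every $n > 0$ (by the local criterion for flatness, together with the identification of $(f^n)_{\mathfrak{p}}$ with the Frobenius on $R_{\mathfrak{p}}$). The only mildly delicate point, and hence the main thing to verify carefully, is the compatibility of $f^n$ with localization and the resulting translation between the flatness of $f^n$ on $R$ and the flatness of its localizations; once that compatibility is stated precisely, the rest is an application of the two local-to-global principles and Theorem \ref{thm:Kunz}.
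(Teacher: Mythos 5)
Your argument is correct and follows exactly the same line as the paper, which dispatches the corollary with the single remark that regularity and flatness are local properties; you have simply made explicit the localization compatibility of Frobenius ($f^{-1}(\mathfrak{p}) = \mathfrak{p}$) that the paper leaves implicit. No difference in approach, just more detail.
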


\begin{lem}\label{lem:Frob_resolution}
Let $L$ be a field of positive characteristic $p > 0$, and $R := L [ y_1, \ldots , y_n ]$ the polynomial ring with $n$ indeterminates over $L$.
Let $f_1, \ldots, f_t \in R$ be homogeneous polynomials with $d_j^{(1)} = \mathrm{deg} (f_j)$ for $1 \leq j \leq t$, and $J \subset R$ the ideal generated by $f_1, \ldots , f_t$.
Suppose that $R/ J$ has the following graded free resolution:
\begin{equation}
0 \rightarrow \bigoplus_{j=1}^{t_n}R\left( - d_{j}^{ (n) } \right) \stackrel{\varphi_{n}}{\to} \cdots \stackrel{\varphi_2}{\to} \bigoplus_{j=1}^{t_1}R\left( - d_{j}^{\left( 1 \right) } \right) \stackrel{\varphi_1}{\to} R \stackrel{\varphi_0}{\to} R / J \rightarrow 0. \label{eq:resolution}
\end{equation}
We denote by $\left( g_{k,\ell}^{(i)} \right)_{k, \ell}$ the representation matrix of $\varphi_i$.
Then $R / J_p$ has a graded free resolution of the form
\begin{equation}
0 \rightarrow \bigoplus_{j=1}^{t_n}R\left( - d_{j}^{( n )} p \right) \stackrel{\varphi_{n}^{(p)}}{\to} \cdots \stackrel{\varphi_2^{(p)}}{\to} \bigoplus_{j=1}^{t_1}R\left( - d_{j}^{\left( 1 \right)} p \right) \stackrel{\varphi_1^{(p)}}{\to} R \stackrel{\varphi_0^{(p)}}{\to} R / J_p \rightarrow 0, \label{eq:resolution_p} 
\end{equation}
where $J_p := \langle f_1^p, \ldots , f_t^p \rangle_R$, and
$\varphi_{i}^{(p)}$ is given by the matrix with entries equal to the $p$-th powers of the entries of the matrix
for $\varphi_i$ for each $0 \leq i \leq n$.
\end{lem}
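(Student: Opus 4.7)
The plan is to obtain the second resolution by applying the Frobenius functor $F_R(\cdot) = \cdot \otimes_R {}^f\!R$ to the given resolution (\ref{eq:resolution}) and identifying each term and map using Lemma \ref{lem:Frob_func}. Since $R = L[y_1, \ldots, y_n]$ is a polynomial ring over a field, it is a regular ring, so by Corollary \ref{cor:Kunz} the Frobenius endomorphism $f$ on $R$ is flat. Consequently, tensoring with ${}^f\!R$ is exact, and $F_R$ is an exact functor on $R$-modules (not merely right exact).

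First I would truncate (\ref{eq:resolution}) to the exact sequence
\[
0 \rightarrow \bigoplus_{j=1}^{t_n}R\left(-d_{j}^{(n)}\right) \stackrel{\varphi_{n}}{\to} \cdots \stackrel{\varphi_1}{\to} R \stackrel{\varphi_0}{\to} R/J \rightarrow 0
\]
and apply $F_R$. By flatness this gives an exact sequence. By Lemma \ref{lem:Frob_func}(3), the last term becomes $F_R(R/J) \cong R/J_p$. By Lemma \ref{lem:Frob_func}(2), each free module $\bigoplus_{j=1}^{t_i} R$ (underlying $\bigoplus_{j=1}^{t_i} R(-d_j^{(i)})$) is carried to a free module of the same rank. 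Finally, by Lemma \ref{lem:Frob_func}(4), the map $F_R(\varphi_i)$ is represented by the matrix $\bigl((g_{k,\ell}^{(i)})^p\bigr)_{k,\ell}$ obtained by raising each entry of $\varphi_i$ to the $p$-th power. This produces an exact sequence of $R$-modules that, as an ungraded sequence, has exactly the form (\ref{eq:resolution_p}); so the content of the lemma reduces to verifying the grading shifts.

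The main subtle point is tracking the degree shifts to confirm that $F_R(R(-d)) \cong R(-dp)$ as graded modules and that the resulting maps are graded of degree zero. Since $\varphi_i$ is graded of degree zero between $\bigoplus_j R(-d_j^{(i)})$ and $\bigoplus_j R(-d_j^{(i-1)})$, each entry $g_{k,\ell}^{(i)} \in R$ is homogeneous of degree $d_k^{(i)} - d_\ell^{(i-1)}$, so its $p$-th power $(g_{k,\ell}^{(i)})^p$ is homogeneous of degree $p\,d_k^{(i)} - p\,d_\ell^{(i-1)}$. This is precisely the degree forcing the matrix $\bigl((g_{k,\ell}^{(i)})^p\bigr)_{k,\ell}$ to define a graded homomorphism of degree zero
\[
\varphi_i^{(p)} : \bigoplus_{j=1}^{t_i} R\bigl(-d_j^{(i)} p\bigr) \longrightarrow \bigoplus_{j=1}^{t_{i-1}} R\bigl(-d_j^{(i-1)} p\bigr),
\]
as required by the statement.

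Putting this together, the image of (\ref{eq:resolution}) under $F_R$, reinterpreted with the correct twists $R(-d_j^{(i)} p)$, is precisely (\ref{eq:resolution_p}); exactness comes from flatness of Frobenius (Kunz), the identification of the last term comes from Lemma \ref{lem:Frob_func}(3), and the explicit description of each $\varphi_i^{(p)}$ in terms of $p$-th powers of the entries of $\varphi_i$ comes from Lemma \ref{lem:Frob_func}(4). The hard part is really just the bookkeeping for the graded shifts; once one observes that raising a homogeneous element of degree $e$ to the $p$-th power yields one of degree $ep$, the required graded free resolution of $R/J_p$ drops out.
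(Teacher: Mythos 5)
Your proposal is correct and follows essentially the same route as the paper's own proof: apply the Frobenius functor $F_R(\cdot)$ to the given resolution, invoke Corollary \ref{cor:Kunz} (Kunz's theorem) together with regularity of $R$ to upgrade right-exactness of $F_R$ to exactness, and use Lemma \ref{lem:Frob_func} to identify the resulting terms and maps with those in (\ref{eq:resolution_p}). The paper states this very tersely and dismisses the grading shifts as ``straightforward,'' whereas you verify them explicitly by noting that $p$-th powers multiply degrees by $p$; this is simply a fuller account of the same argument, not a different approach.
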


\begin{proof}
By Lemma \ref{lem:Frob_func} and Corollary \ref{cor:Kunz} together with the fact that $R$ is a regular ring of dimension $n$, the sequence
\begin{equation}
0 \rightarrow \bigoplus_{j=1}^{t_n}R\left( - d_{j}^{( n )} p \right) \stackrel{\varphi_{n}^{(p)}}{\to} \cdots \stackrel{\varphi_2^{(p)}}{\to} \bigoplus_{j=1}^{t_1}R\left( - d_{j}^{\left( 1 \right)} p \right) \stackrel{\varphi_1^{(p)}}{\to} R \stackrel{\varphi_0^{(p)}}{\to} R / J_p \rightarrow 0 \label{seq:right_exact}
\end{equation}
is exact.
It is straightforward that the sequence \eqref{seq:right_exact} gives a graded free resolution for $R / J_p$.
\end{proof}

\begin{rem}
As in Section \ref{sec_Frob}, one computes both resolutions of $S / I$ and $S / I_p$ in order to compute representation matrices for the action of Frobenius to the cohomology groups.
Lemma \ref{lem:Frob_resolution} implies that it suffices for the computation to get only a resolution of $S / I$
(but in such a case one has to compute the $p$-th powers of the entries of each representation matrix of the resolution).
\end{rem}

\section{Proof of Theorem \ref{thm:main}: Algorithm and Complexity Analysis}\label{sec:alg_comp}

In this section, we prove Theorem \ref{thm:main} stated in Section \ref{sec:Intro};
we give a concrete algorithm to compute representation matrices for the action of Frobenius to the cohomology groups of algebraic varieties, prove its correctness, and determine its complexity.
As in the previous section, let $K$ be a perfect field of characteristic $p$, and let $S = K [ x_0, \ldots , x_r ]$ denote the polynomial ring of $r+1$ indeterminates over $K$.

\subsection{Concrete algorithm}\label{subsec:main_alg}

In the following, we give our main algorithm.
The idea of Algorithm (I) has been given in \cite[Section 5.3]{Kudo}.
Different from the computational method given in \cite[Section 5.3]{Kudo}, we clarify the structure of free resolutions and lifting morphisms for the input, and the images of the Frobenius on the cohomology groups.

We first fix $r$, the dimension of the projective space $\mathbf{P}^r$.
The inputs are a tuple of homogeneous polynomials $( f_1, \ldots , f_t ) \in S^t $ with $X:= V (f_1, \ldots , f_t) \subset \mathbf{P}^r$, the characteristic $p$, and an integer $1 \leq q \leq r-1$.
The output is the rank of the representation matrix of the Frobenius $F^{\ast,q} : H^q ( X, \mathcal{O}_X ) \longrightarrow H^q ( X, \mathcal{O}_X )$ for a suitable basis.

\paragraph{Algorithm (I)}\label{sec:main_alg}
Given homogeneous polynomials $f_1, \ldots , f_t \in S = K [ x_0, \ldots , x_r ]$, a rational prime $p$ and an integer $1 \leq q \leq r-1$, we give an algorithm to compute the representation matrix for the action of Frobenius to $H^q ( X, {\cal O}_X )$, where $X = V ( f_1, \ldots , f_t )$.
Our algorithm has the following two main procedures Steps A and B, and Step A (resp.\ Step B) consists of two (resp.\ three) sub-procedures (A-1)--(A-2) (resp.\ (B-1)--(B-3)):
\begin{enumerate}[Step A.]
\item We first compute free resolutions for $S / I$ and $S / I_p$, and lifting morphisms.
This step can be divided into the following two steps:
	\begin{enumerate}[(A-1)]
		\item Given homogeneous polynomials $f_1, \ldots , f_t \in S$ and a rational prime $p$, we compute (minimal) free resolutions for $S / I$ and $S / I_p$.
		Recall from Lemma \ref{lem:Frob_resolution} that we can compute a free resolution of $S / I_p$ by $p$-th power multiplications from the resolution of $S / I$.
		Specifically, we compute all the elements
		\begin{eqnarray}
		t_i,\ d_j^{( i )},\ \left( g_{k,\ell}^{(i)} \right)_{k,\ell},\ \mbox{and } \left( \left( g_{k,\ell}^{(i)} \right)^p \right)_{k,\ell} \mbox{ for } 1 \leq i \leq r+1  \label{eq:inputs1}
		\end{eqnarray}
		in Lemma \ref{lem:Frob_resolution}.
		These elements are determined from the resolution.
		\item Given the elements of \eqref{eq:inputs1}, we compute homomorphisms $\psi_i$ for $1 \leq i \leq r+1$ such that the following diagram commutes:
		$$\xymatrix{
		0 \ar[r] & \bigoplus_{j=1}^{t_{r+1}} S \left( - d_j^{( r+1 )} p \right)               \ar[d]^{\psi_{r+1}} \ar[r]^(0.7){\varphi_{r+1}^{(p)}} &  \cdots \ar[r]^(0.3){\varphi_{2}^{(p)}} & \bigoplus_{j=1}^{t_{1}} S \left( - d_j^{( 1 )} p \right)          \ar[d]^{\psi_{1}} \ar[r]^(0.7){\varphi_1^{(p)}} &  S          \ar[d]^{\psi_{0}} \ar[r]^{\varphi_0^{(p)}} & S/ I_p                  \ar[d]^{\psi} \ar[r]    & 0            \\
		0 \ar[r] & \bigoplus_{j=1}^{t_{r+1}} S \left( - d_j^{( r+1 )}\right)                 \ar[r]^(0.7){\varphi_{r+1}} &  \cdots \ar[r]^(0.3){\varphi_{2}} & \bigoplus_{j=1}^{t_{1}} S \left( - d_j^{\left( 1 \right)} \right)         \ar[r]^(0.7){\varphi_1} &  S  \ar[r]^{\varphi_0} & S/ I    \ar[r]    & 0            
		}$$
		where $\psi_0$ is the identity map on $S$, and $\psi$ is given by $h + I_p \mapsto h + I$.
		Specifically, we compute the representation matrices of $\psi_i$ via standard bases for $1 \leq i \leq r+1$, say
		\begin{eqnarray}
		C_i = \left( h_{k,\ell}^{(i)} \right)_{k,\ell} \mbox{ for } 1 \leq i \leq r+1. \label{eq:lifting}
		\end{eqnarray}
		Each homomorphism $\psi_i$ is called a {\it lifting homomorphism}.
		A method for computing lifting homomorphisms is given in e.g., \cite[Chapter 15]{Eisenbud_c}, which is based on division algorithms on free $S$-modules. 
	\end{enumerate}
\item Given the elements of \eqref{eq:inputs1} and \eqref{eq:lifting}, we next compute the representation matrix for the action of Frobenius $F^{\ast,q} : H^q ( X, \mathcal{O}_X ) \longrightarrow H^q ( X, \mathcal{O}_X )$. This step can be divided into the following three steps:
	\begin{enumerate}[(B-1)]
	\item Given the elements of \eqref{eq:inputs1}, we compute a basis of $H^q (X, \mathcal{O}_X)$.
	More concretely, we compute the basis via a certain isomorphism of $K$-vector spaces.
	As in Subsection \ref{sec_Frob}, put
	\begin{eqnarray}
		{\cal G}_{i} := \bigoplus_{j=1}^{t_{i}}{\cal O}_{\mathbf{P}^r}\left( - d_{j}^{( i )}\right), \ 
		\Phi_i := \varphi_i^{\sim}, \mbox{ for $0 \leq i \leq r + 1$}. \nonumber
	\end{eqnarray}
	As stated in Lemma \ref{lem:main}, we have the isomorphism
	\begin{eqnarray}
		H^q (X, \mathcal{O}_X) \cong \Ker \left( H^r ( \Phi_{r-q} ) \right) / \Im \left( H^r ( \Phi_{r-q+1} ) \right) \mbox{ for } 1 \leq q \leq r-1.
	\end{eqnarray}
	With this isomorphism, one can compute the basis $\mathcal{B} = \{ \mathbf{b}_1, \ldots , \mathbf{b}_g \}$ of $H^q (X, \mathcal{O}_X)$, and at the same time gets a $g \times g^{\prime}$ matrix $( b_{i,j} )_{i,j}$ such that
	\begin{eqnarray}
	\mathbf{b}_i = \sum_{j=1}^{g^{\prime}} b_{i,j} \mathbf{v}_{j}, \label{eq:repbasis}
	\end{eqnarray}
	where the set of $\mathbf{v}_j$'s is a basis of $H^r (\mathbf{P}^r, \mathcal{G}_{r-q})$, given in Remark \ref{rem:basis} (1).
	In other words, the matrix $( b_{i,j} )_{i,j}$ gives rise to a basis of the quotient vector space $\Ker \left( H^r ( \Phi_{r-q} ) \right) / \Im \left( H^r ( \Phi_{r-q+1} ) \right)$.
	For a more detailed description on computing the basis of $H^q (X, \mathcal{O}_X)$, see \cite{Kudo}.
	\item From the basis $\mathcal{B} = \{ \mathbf{b}_1, \ldots , \mathbf{b}_g \}$ of $H^q (X, \mathcal{O}_X)$, we compute the image of the basis by $F_1^{\ast, q}$, where $F_1$ denotes the absolute Frobenius on $\mathbf{P}^r$.
	More concretely, we compute the $p$-th power of each entry of $\mathbf{b}_i$ for $1 \leq i \leq g$.
	\item From the image $\mathbf{b}_i^{(p)} :=F_1^{\ast, q} (\mathbf{b}_i)$, the representation matrix $C_{r-q}$ of the lifting homomorphism $\psi_{r-q}$ and the basis $\mathcal{B}$ with the matrix $( b_{i,j} )_{i,j}$, we compute the representation matrix of $F^{\ast, q}$, and output its rank.
	More concretely, we compute $F^{\ast, q} (\mathbf{b}_i) = H^r (\psi_{r-q}^{\sim}) ( F_1^{\ast,q} (\mathbf{b}_i)) = H^r(\psi_{r-q}^{\sim}) (\mathbf{b}_i^{(p)}) = \mathbf{b}_i^{(p)} \cdot {}^t C_{r-q} $, and the representation matrix of $F^{\ast,q} : H^q ( X, \mathcal{O}_X ) \longrightarrow H^q ( X, \mathcal{O}_X)$ via the basis $\mathcal{B}$.
Finally, compute the rank and output it.  
	\end{enumerate}
\end{enumerate}


We here state the correctness of Algorithm (I).

\begin{prop}
$\mbox{\rm Algorithm (I)}$ outputs the rank of $F^{\ast,q}$.
\end{prop}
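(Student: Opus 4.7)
The plan is to verify that each sub-procedure of Algorithm (I) implements the corresponding arrow in the commutative diagram of Lemma \ref{lem:main}, so that the matrix produced in step (B-3) really represents $F^{\ast,q}$ with respect to the basis chosen in step (B-1). Once that is established, taking the rank at the end is trivially correct.

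First, I would address Step A. Procedure (A-1) outputs a graded minimal free resolution of $S/I$, which is standard (e.g.\ via syzygy computations), and then produces the putative resolution of $S/I_p$ by replacing each entry of each matrix with its $p$-th power. The correctness of the second resolution is exactly the content of Lemma \ref{lem:Frob_resolution}, which depends on Kunz's theorem (Corollary \ref{cor:Kunz}) guaranteeing flatness of the Frobenius on $S$. Procedure (A-2) computes the matrices $C_i$ of the lifting maps $\psi_i$; the standard inductive construction via division over free $S$-modules, as described in \cite[Chapter 15]{Eisenbud_c}, produces $\psi_i$ making the required squares commute, with $\psi_0 = \mathrm{id}_S$ lifting $\psi: S/I_p \to S/I$, $h + I_p \mapsto h + I$.

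Next, I would handle Step B by chasing through the rightmost column of the diagram of Lemma \ref{lem:main}. Procedure (B-1) relies on the isomorphism
\[
H^q(X, \mathcal{O}_X) \cong \Ker(H^r(\Phi_{r-q}))/\Im(H^r(\Phi_{r-q+1})),
\]
combined with the explicit monomial basis of $H^r(\mathbf{P}^r, \mathcal{G}_{r-q})$ recalled in Remark \ref{rem:basis}(1). The construction of the matrix $(b_{i,j})_{i,j}$ that expresses each $\mathbf{b}_i$ in terms of the $\mathbf{v}_j$'s is exactly the kernel-modulo-image computation of \cite[Section 3]{Kudo}, whose correctness I would cite. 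Procedure (B-2) applies the $p$-th power map coordinatewise; by the top-right vertical arrow of the diagram in Lemma \ref{lem:main}, this implements $F_1^{\ast,q}$ on the representatives in $H^r(\mathbf{P}^r, \mathcal{G}_{r-q}^{(p)})$.

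The crux of the proof is Procedure (B-3). Here I must show that the map induced on cohomology by $\psi_{r-q}^{\sim}: \mathcal{G}_{r-q}^{(p)} \to \mathcal{G}_{r-q}$ is, on representatives expressed in the standard monomial basis, right-multiplication by ${}^t C_{r-q}$. This follows because $H^r(\mathbf{P}^r, -)$ is additive and sends the direct-sum decomposition of $\mathcal{G}_{r-q}$ and $\mathcal{G}_{r-q}^{(p)}$ into a direct-sum decomposition of $K$-vector spaces, so the matrix of $H^r(\Psi_{r-q})$ in the monomial bases is read off from the defining matrix $C_{r-q}$ of $\psi_{r-q}$ by acting on each coordinate of a cohomology class. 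Combining with the commuting left-hand square of Lemma \ref{lem:main}, the vector $\mathbf{b}_i^{(p)} \cdot {}^t C_{r-q}$ then represents $F^{\ast,q}(\mathbf{b}_i)$ in $\Ker(H^r(\Phi_{r-q}))/\Im(H^r(\Phi_{r-q+1}))$; re-expressing it in the basis $\mathcal{B}$ via $(b_{i,j})_{i,j}$ gives the correct representation matrix of $F^{\ast,q}$, and its rank is the desired output.

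The main obstacle I anticipate is the last verification: carefully justifying that applying $H^r(\Psi_{r-q})$ amounts to multiplying the coordinate vector of a class by ${}^t C_{r-q}$, in particular that the monomial representatives chosen in (B-1) after applying (B-2) land in the correct summands of $H^r(\mathbf{P}^r, \mathcal{G}_{r-q}^{(p)})$ (the twists match because $-d_j^{(r-q)} p$ is exactly the twist appearing in $\mathcal{G}_{r-q}^{(p)}$), and that passage to quotients by $\Im(H^r(\Phi_{r-q+1}))$ is well-defined because $H^r(\Psi_{r-q})$ descends by commutativity of the square involving $\Phi_{r-q+1}^{(p)}$ and $\Phi_{r-q+1}$. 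Once this bookkeeping is in place, the proposition is immediate.
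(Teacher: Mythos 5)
Your proposal is correct and follows essentially the same route as the paper: both deduce the result by observing that Steps (B-2) and (B-3) implement, respectively, the ``power $p$'' map and $H^r(\Psi_{r-q})$ on $\Ker(H^r(\Phi_{r-q}))/\Im(H^r(\Phi_{r-q+1}))$, whose composition is $F^{\ast,q}$ by Lemma \ref{lem:main}. Your version simply spells out the step-by-step bookkeeping (including the role of Lemma \ref{lem:Frob_resolution} in Step A and the matrix $(b_{i,j})$ in Step (B-1)) that the paper's proof compresses into ``straightforward from the construction.''
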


\begin{proof}
It is straightforward from the construction of Algorithm (I) that the output is the rank of the composition map of
$$\xymatrix{
\mathrm{Ker} (H^r (\Phi_{r-q})) / \mathrm{Im} (H^r (\Phi_{r-q+1})) \ar[rr]^{\mbox{ power }p} & & \mathrm{Ker} (H^r (\Phi_{r-q}^{(p)})) / \mathrm{Im} (H^r (\Phi_{r-q+1}^{(p)}))
}$$
and
$$\xymatrix{
\mathrm{Ker} (H^r (\Phi_{r-q}^{(p)})) / \mathrm{Im} (H^r (\Phi_{r-q+1}^{(p)}))  \ar[rr]^{H^r (\psi_{r-q}^{\sim})} & & \mathrm{Ker} (H^r (\Phi_{r-q})) / \mathrm{Im} (H^r (\Phi_{r-q+1})) .
}$$
By Lemma \ref{lem:main}, the rank of this composition coincides with that of $F^{\ast,q}$.
\end{proof}

\begin{rem}
In the case of $q=r$, one has
\begin{eqnarray}
	H^r (X, \mathcal{O}_X) \cong \mathrm{Coker} (H^r (\Phi_1))
\end{eqnarray}
and thus one can compute $F^{\ast,r}$ in a similar way to the cases of $1 \leq q \leq r-1$.
\end{rem}

In the following paragraph, we write down the computation of Steps (B-2) and (B-3) in algorithmic format, which shall give a useful information to implement the algorithm over mathematical softwares.
Computation of Step A is well-known in theory of Gr\"obner bases (see \cite[Chapter 6]{CLO2}, \cite[Chapter 15]{Eisenbud_c} and \cite[Subsections 4.1 and 4.2]{DL} for details). 
For Step (B-1), \cite{Kudo} gives a concrete description.
Let us omit to describe in this paper Steps A and (B-1).

\paragraph{A pseudocode for Steps (B-2) and (B-3)}

Recall from Step (B-1) of Algorithm (I) together with Remark \ref{rem:basis}, we get a basis $\{ \mathbf{b}_1, \ldots , \mathbf{b}_g \}$ for $H^q (X, \mathcal{O}_X)$ as follows:
The basis of $H^r (\mathbf{P}^r, \mathcal{G}_{r-q})$ computed in our algorithm is
	\[
	\{ \mathbf{v}_1, \ldots , \mathbf{v}_{g^{\prime}} \} = \{ x_0^{\ell_0} \cdots x_r^{\ell_r} \mathbf{e}_j : 1 \leq j \leq t_{r-q},\ (\ell_0, \ldots , \ell_r ) \in (\mathbb{Z}_{<0})^{r+1},\ \ell_0 + \cdots + \ell_r = -d_j^{(r-q)} \},
	\]
where $g^{\prime}:= \mathrm{dim}_K H^r (\mathbf{P}^r, \mathcal{G}_{r-q})$.
One has a $K$-basis of $\Ker \left( H^r( \Phi_{r-q}) \right) / \Im \left( H^r ( \Phi_{r-q+1} ) \right)$ as
	\[
	\left[
	\begin{array}{c}
	\mathbf{b}_1 \\
	\vdots \\
	\mathbf{b}_g 
	\end{array}
	\right]
	= B \cdot \left[ 
	\begin{array}{c}
	\mathbf{v}_1 \\
	\vdots \\
	\mathbf{v}_{g^{\prime}}
	\end{array}
	\right]
	\]
for some $g \times g^{\prime}$ matrix $B = (b_{i,j})_{i,j}$ over $K$.
Let $C_{r-q}$ denote the representation matrix for the lifting homomorphism $\psi_{r-q}$ computed in Step (A-2), say
	\[
	( \psi_{r-q} (\mathbf{e}_1), \ldots , \psi_{r-q} (\mathbf{e}_{t_{r-q}}) ) = ( \mathbf{e}_1, \ldots , \mathbf{e}_{t_{r-q}} ) \cdot C_{r-q}.
	\]
With these $B$, $\{ \mathbf{v}_1, \ldots , \mathbf{v}_{g^{\prime}} \}$ and $C_{r-q} = (h_{i,j})_{i,j}$ as inputs, we here write down a pseudocode (Algorithm \ref{alg:rep}) for Steps (B-2)-(B-3).

\begin{algorithm}[htb]
\caption{\texttt{RankOfFrobenius}($p, B, [ \mathbf{v}_i]_{i=1}^{g^{\prime}}, C_{r-q})$}\label{alg:rep}
\begin{algorithmic}[1]
\REQUIRE{characteristic $p$, a matrix $B$, a basis $[ \mathbf{v}_i]_{i=1}^{g^{\prime}}$, and a matrix $C_{r-q}$}
\ENSURE{the rank of $F^{\ast,q}$}
\STATE Compute the $p$-th power of each entry of $B$
\STATE $B^{(p)}$ $\leftarrow$ $( b_{i,j}^p )_{i,j}$
\FOR{$j=1$ \TO $g^{\prime}$}
	\STATE Write $\mathbf{v}_j = X_0^{\ell_0(j)} \cdots X_r^{\ell_r(j)} \mathbf{e}_{k(j)}$ for some $\ell_0 (j), \ldots , \ell_r (j) \in \mathbb{Z}_{<0}$ and $k (j) \geq 1$
	\STATE $\mathbf{v}_j^{(p)}$ $\leftarrow$ $X_0^{\ell_0(j) p} \cdots X_r^{\ell_r(j) p} \mathbf{e}_{k(j)}$
\ENDFOR
\STATE Compute
\[
B^{(p)} \cdot \left[ 
	\begin{array}{c}
	\mathbf{v}_1^{(p)} \\
	\vdots \\
	\mathbf{v}_{g^{\prime}}^{(p)}
	\end{array}
	\right]	
\cdot {}^t C_{r-q}
\]
\STATE Write
\[
B^{(p)} \cdot \left[ 
	\begin{array}{c}
	\mathbf{v}_1^{(p)} \\
	\vdots \\
	\mathbf{v}_{g^{\prime}}^{(p)}
	\end{array}
	\right]	
\cdot {}^t C_{r-q} = 
B^{\prime} \cdot \left[ 
	\begin{array}{c}
	\mathbf{v}_1 \\
	\vdots \\
	\mathbf{v}_{g^{\prime}}
	\end{array}
	\right]	
\]
for some $g \times g^{\prime}$ matrix over $K$
\STATE Solve the linear system $B X = B^{\prime}$
\STATE Compute the rank of $X$
\RETURN $\mathrm{rank}(X)$ 
\end{algorithmic}
\end{algorithm}

\begin{prop}\label{prop:subalg}
Algorithm \ref{alg:rep} outputs the rank of $F^{\ast,q}$.
\end{prop}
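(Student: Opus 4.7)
The plan is to verify that Algorithm \ref{alg:rep} is a pseudocode-level implementation of the composition prescribed by Lemma \ref{lem:main}, and then invoke that lemma. Under the isomorphism $H^q(X,\mathcal{O}_X) \cong \Ker(H^r(\Phi_{r-q}))/\Im(H^r(\Phi_{r-q+1}))$, the action $F^{\ast,q}$ factors as the entry-wise power-$p$ map followed by $H^r(\Psi_{r-q})$; so I shall check that the algorithm realizes each of these two maps on the chosen basis $\{\mathbf{b}_1,\ldots,\mathbf{b}_g\}$, and then extracts the representation matrix correctly.

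In detail, I would argue in three blocks. (i) Lines 1--2 produce $B^{(p)} = (b_{i,j}^p)_{i,j}$, and Lines 3--6 produce $\mathbf{v}_j^{(p)} = x_0^{\ell_0(j)p}\cdots x_r^{\ell_r(j)p}\mathbf{e}_{k(j)}$. Since $\mathbf{b}_i = \sum_j b_{i,j}\mathbf{v}_j$, Frobenius of $\mathbb{F}_p$-linearity and the freshman's dream give that the $i$-th row of $B^{(p)}\cdot [\mathbf{v}_j^{(p)}]$ equals the entry-wise $p$-th power $\mathbf{b}_i^{(p)}$, which is precisely the image of $\mathbf{b}_i$ under the power-$p$ map into $\Ker(H^r(\Phi_{r-q}^{(p)}))/\Im(H^r(\Phi_{r-q+1}^{(p)}))$. (ii) Since $C_{r-q}$ is the standard-basis matrix of the lifting morphism $\psi_{r-q}$, the induced map $H^r(\Psi_{r-q})$ acts on cohomology row-vector representatives by right multiplication by ${}^t C_{r-q}$; this is exactly the computation in Line 7 (cf.\ the description of Step (B-3) in Algorithm (I)). Hence the result of Line 7 represents the vector $(F^{\ast,q}(\mathbf{b}_1),\ldots,F^{\ast,q}(\mathbf{b}_g))$ in $H^r(\mathbf{P}^r,\mathcal{G}_{r-q})$ modulo $\Im(H^r(\Phi_{r-q+1}))$.

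(iii) Lines 8--10 rewrite this result as $B'\cdot [\mathbf{v}_j]$ and solve the linear system $BX = B'$; the resulting $X$ encodes the expression of each $F^{\ast,q}(\mathbf{b}_i)$ as a linear combination of $\{\mathbf{b}_k\}$ in the quotient, so $\rank(X)$ coincides with $\rank(F^{\ast,q})$, as required. The main obstacle is the small linear-algebra subtlety at block (iii): $B'$ is only determined modulo $\Im(H^r(\Phi_{r-q+1}))$, so I must verify that any admissible representative yields a linear system whose solution has the same rank. This follows because altering $B'$ within its coset amounts to adding rows that vanish in $\Ker/\Im$, which cannot change the rank of the induced endomorphism on the quotient. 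With these three blocks established, combining them with Lemma \ref{lem:main} gives the desired equality of ranks.
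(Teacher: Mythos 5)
Your proof is correct and follows essentially the same three-step argument as the paper: identifying $B^{(p)}\cdot[\mathbf{v}_j^{(p)}]$ with $F_1^{\ast,q}(\mathbf{b}_i)$, applying $H^r(\Psi_{r-q})$ via right multiplication by ${}^tC_{r-q}$, and extracting the representation matrix $X$ by solving the linear system against $B$. The only difference is your closing remark about independence of the choice of representative $B'$ in $\Ker/\Im$, a point the paper's proof leaves implicit; it is a reasonable observation but does not change the substance of the argument.
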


\begin{proof}
We first claim
\[
	\left[
	\begin{array}{c}
	F_1^{\ast, q} ( \mathbf{b}_1) \\
	\vdots \\
	F_1^{\ast, q} (\mathbf{b}_g) 
	\end{array}
	\right]
	= B^{(p)} \cdot \left[ 
	\begin{array}{c}
	\mathbf{v}_1^{(p)} \\
	\vdots \\
	\mathbf{v}_{g^{\prime}}^{(p)}
	\end{array}
	\right].
	\]
Indeed, we have
\begin{eqnarray}
F_1^{\ast, q} (\mathbf{b}_i) &=& F_1^{\ast, q} \left( \sum_{j=1}^{g^{\prime}} b_{i,j} \mathbf{v}_j \right) = F_1^{\ast, q} \left( \sum_{j=1}^{g^{\prime}} b_{i,j} X_0^{\ell_0 (j)}, \ldots , X_r^{\ell_r (j)} \mathbf{e}_{k(j)} \right) \nonumber \\
&=&  \sum_{j=1}^{g^{\prime}} (b_{i,j})^p  X_0^{\ell_0 (j) p}, \ldots , X_r^{\ell_r (j) p} \mathbf{e}_{k(j)}
 = \sum_{j=1}^{g^{\prime}} (b_{i,j})^p  \mathbf{v}_{j}^{(p)}, \nonumber 
\end{eqnarray}
where we write $\mathbf{v}_j = X_0^{\ell_0 (j)}, \ldots , X_r^{\ell_r (j)} \mathbf{e}_{k(j)}$ for some $\ell_0 (j), \ldots , \ell_r (j) \in \mathbb{Z}_{<0}$ and $k (j) \geq 1$.
Furthermore we have
\begin{eqnarray}
\left( \sum_{j=1}^{g^{\prime}} (b_{i,j})^p  \mathbf{v}_{j}^{(p)} \right) \cdot {}^t C_{r-q} & = & \left(  F_1^{\ast,q} (\mathbf{b}_i) \right) \cdot {}^t C_{r-q} = \left( H^r (\Psi_{r-q}) \circ F_1^{\ast,q} \right) (\mathbf{b}_i) \nonumber \\
& = & ( F^{\ast,q}) (\mathbf{b}_i) \in \Ker \left( H^r( \Phi_{r-q}) \right) / \Im \left( H^r ( \Phi_{r-q+1} ) \right). \nonumber 
\end{eqnarray}
Recall that $\Ker \left( H^r( \Phi_{r-q}) \right) \subset H^r (\mathbf{P}^r, \mathcal{G}_{r-q})$.
Hence there exist matrices $B^{\prime}$ and $X$ over $K$ such that
\[
B^{(p)} \cdot \left[ 
	\begin{array}{c}
	\mathbf{v}_1^{(p)} \\
	\vdots \\
	\mathbf{v}_{g^{\prime}}^{(p)}
	\end{array}
	\right]	
\cdot {}^t C_{r-q} = 
B^{\prime} \cdot \left[ 
	\begin{array}{c}
	\mathbf{v}_1 \\
	\vdots \\
	\mathbf{v}_{g^{\prime}}
	\end{array}
	\right]	
= X B \cdot \left[ 
	\begin{array}{c}
	\mathbf{v}_1 \\
	\vdots \\
	\mathbf{v}_{g^{\prime}}
	\end{array}
	\right],
\]
where ${}^t \! X$ gives the representation matrix of $F^{\ast,q}$ for the basis $\{ \mathbf{b}_1, \ldots , \mathbf{b}_g \}$.
\end{proof}

\begin{rem}\label{rem:free_resolution}
In Step A, we compute (minimal) free resolutions and lifting homomorphisms for the input modules.
Several computational methods for free resolutions and lifting homomorphisms have been proposed and implemented in computer algebra systems.
Such computations can be done in exponential time in general.
However, objects such as the cohomology groups and their related invariants should be determined by mathematical invariants of input structures.
From this, in our complexity analysis of the next subsection (Subsection \ref{subsec:Complexity}), we set such mathematical invariants obtained from the form of free resolutions and lifting homomorphisms as inputs.
\end{rem}

\subsection{Complexity analysis}\label{subsec:Complexity}

In this subsection, we investigate the complexity of Algorithm (I) given in the previous subsection.
Recall that the input objects of the algorithm are an integer $1 \leq q \leq r-1$, a rational prime $p$, and homogeneous polynomials $f_1, \ldots f_t \in S$.
Note that the computation of Step A is generally done in exponential time for the degrees and the number of the monomials of $f_i$'s.
For fixed $r$ and $q$, the output object is determined by $p$ and the elements of \eqref{eq:inputs1} and \eqref{eq:lifting}, which are computed in Step A.
From this, we estimate the complexity over $S$ of Algorithm (I) according to the parameters
$p$, $t^{(\mathrm{max})}:= \mathrm{max} \{ t_i : r- q - 1 \leq i \leq r - q + 1 \}$,
$d^{(\mathrm{max})} := \mathrm{max} \{ d^{(i, \mathrm{max})} : r - q - 1 \leq i \leq r - q + 1 \}$,
where $d^{(i, \mathrm{max})}:= \mathrm{max} \{ d_j^{(i)} : 1 \leq j \leq t_i \}$.

\begin{prop}\label{prop_comp}
With notations as above,
Step B of $\mbox{\rm Algorithm (I)}$ in Subsection \ref{subsec:main_alg} (not counting the computation of a basis of $H^r ( \mathbf{P}^r, {\cal G}_i )$ for $r-q-1 \leq i \leq r-q+1$) terminates in
\begin{eqnarray}
O \left( \left( t^{( {\rm max})} ( d^{( \max )} )^r \right)^4  + \left( ( t^{(\mathrm{max})} ) ( d^{(\mathrm{max})})^r \right)^{2 } \mathrm{log}(p) \right) \label{result_comp}
\end{eqnarray}
arithmetic operations over $S = K [ X_0 , \ldots , X_r ]$.
\end{prop}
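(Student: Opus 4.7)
The plan is to bound the cost of each substep of Step B separately in terms of the single invariant $N := t^{(\max)} (d^{(\max)})^r$. First I would use the explicit \v{C}ech-type description in Remark \ref{rem:basis} (1) to bound the dimensions of $H^r(\mathbf{P}^r, \mathcal{G}_i)$ for the three relevant indices $i \in \{r-q-1, r-q, r-q+1\}$. Since the basis consists of monomials $x_0^{\ell_0}\cdots x_r^{\ell_r}\mathbf{e}_j$ with each $\ell_k<0$ and summing to $-d_j^{(i)}$, we obtain $\dim_K H^r(\mathbf{P}^r, \mathcal{G}_i) \le \sum_{j=1}^{t_i}\binom{d_j^{(i)}-1}{r} = O(t_i (d^{(i,\max)})^r) = O(N)$. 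In particular the matrix $B$ produced by Step (B-1) is $g\times g'$ over $K$ with $g \le g' = O(N)$.

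For Step (B-2), the cost is dominated by forming the $g\cdot g' = O(N^2)$ entries of $B^{(p)}$, each a $p$-th power in $K$ of cost $O(\log p)$ via binary exponentiation, for a total of $O(N^2\log p)$; constructing the monomials $\mathbf{v}_j^{(p)}$ is mere exponent-scaling and contributes only $O(N)$. This accounts for the second term of \eqref{result_comp}.

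For Step (B-3), one first assembles
\[
B^{(p)} \cdot \bigl[\mathbf{v}_j^{(p)}\bigr]_{j=1}^{g'} \cdot {}^t C_{r-q}
\]
and re-expresses the result in the basis $[\mathbf{v}_j]$ of $H^r(\mathbf{P}^r, \mathcal{G}_{r-q})$ to obtain the matrix $B'$ of Algorithm \ref{alg:rep}. Counting arithmetic operations over $S$ (so that each multiplication by an entry of $C_{r-q}$ is a single operation), the chain product is bounded crudely by $O(g\cdot g'\cdot t_{r-q}^2) = O(N^4)$. One then solves the linear system $XB = B'$ over $K$ by Gaussian elimination on the augmented matrix ($O(N^3)$), and computes $\mathrm{rank}(X)$ on the resulting $g\times g$ matrix ($O(N^3)$). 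Summing yields the bound $O(N^4 + N^2\log p)$ announced in \eqref{result_comp}.

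The main obstacle is the projection step hidden inside the matrix product: when an entry of $C_{r-q}$ is multiplied by a \v{C}ech basis monomial (which has strictly negative exponents in every $x_k$), the expanded product must be truncated to the subspace spanned by monomials that still have strictly negative exponents in every variable. One has to check that this truncation is compatible with the natural identification $H^r(\mathbf{P}^r, \mathcal{G}_{r-q}) \cong \bigoplus_{j=1}^{t_{r-q}} H^r(\mathbf{P}^r, \mathcal{O}_{\mathbf{P}^r}(-d_j^{(r-q)}))$ used throughout (and which ultimately underlies Lemma \ref{lem:main} via the right-exactness of $H^r(\cdot)$), and that the bookkeeping can be done within the claimed number of operations over $S$; this is where the generous $N^4$ upper bound arises.
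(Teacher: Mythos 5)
Your accounting for Steps (B-2) and (B-3) is essentially the same as the paper's: you bound $g, g' = O(N)$ with $N := t^{(\max)}(d^{(\max)})^r$, obtain $O(N^2 \log p)$ for the entrywise $p$-th powers in (B-2), and crudely bound the chain product and linear-algebra work in (B-3) by $O(N^4)$ (the paper decomposes this slightly differently, as $O((t^{(\max)})^3(d^{(\max)})^r)$ for the multiplication by $C_{r-q}$ plus $O(N^4)$ for solving the $g$ linear systems, but the totals agree).

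The genuine gap is that you never bound the cost of Step (B-1) itself. The parenthetical exclusion in the proposition removes only the cost of writing down the monomial $K$-bases of the ambient spaces $H^r(\mathbf{P}^r, \mathcal{G}_i)$; it does \emph{not} exclude the cost of computing $\Ker\left(H^r(\Phi_{r-q})\right)/\Im\left(H^r(\Phi_{r-q+1})\right)$, i.e., of producing the matrix $B$ from the representation matrices of $\varphi_{r-q}$ and $\varphi_{r-q+1}$. You merely observe that $B$ is $g \times g'$ with $g \le g' = O(N)$, and then sum only (B-2) and (B-3) to reach the announced bound. The paper assigns this step the cost $O(N^4)$ by citing \cite[Proposition 3.5.1]{Kudo}, and that term is in fact the source of the dominant $D^4$ contribution in the final estimate; without it your argument does not account for all of Step B. So you should add a bound for (B-1) (either by invoking \cite[Proposition 3.5.1]{Kudo} as the paper does, or by directly estimating the linear algebra on the $O(N) \times O(N)$ matrices representing $H^r(\Phi_{r-q})$ and $H^r(\Phi_{r-q+1})$) before summing the substeps.

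Your concern about the truncation when multiplying \v{C}ech representatives by the entries of $C_{r-q}$ is legitimate, but it is a correctness issue the paper itself treats as implicit (it follows from the standard identification of $H^r(\mathbf{P}^r, \mathcal{O}(m))$ with the span of monomials with all exponents negative, the nonnegative-exponent terms being coboundaries); it does not affect the operation count beyond the crude bounds you already use.
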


\begin{proof}

\begin{enumerate}[(B-1)]
\item[(B-1)] {\it The bases of $H^q ( X, \mathcal{O}_X )$}:
	From \cite[Proposition 3.5.1]{Kudo}, this computation is done in
	\begin{eqnarray}
	O \left( \left( t^{( {\rm max})} ( d^{( \max )} )^r \right)^4 \right) \label{comp_step1}
	\end{eqnarray}
	arithmetic operations over $S$.
	Recall from $\mbox{\rm Algorithm (I)}$ in Subsection \ref{sec:main_alg} that this step outputs a basis $\mathcal{B}$ of $H^q (X, \mathcal{O}_X) \cong \Ker \left( H^r ( \Phi_{r-q} ) \right) / \Im \left( H^r ( \Phi_{r-q+1} ) \right)$ with a basis $\mathcal{V} = \{ \mathbf{v}_1, \ldots , \mathbf{v}_{g^{\prime}} \}$ of $H^r (\mathbf{P}^r, \mathcal{G}_{r-q})$ and a $g \times g^{\prime}$ matrix $( b_{i,j})_{i,j}$ such that \eqref{eq:repbasis} is satisfied.
\item[(B-2)] {\it The image of $\mathcal{B}$ by $F_1^{\ast,q}$}:
	In this step, for each basis element $\mathbf{b}_i \in \mathcal{B}$, we compute the $p$-th power of its each entry.
	Recall that $\mathbf{b}_i$ is of the form $\sum_{j=1}^{g^{\prime}} b_{j,i} \mathbf{v}_j$.
	As stated in the proof of Proposition \ref{prop:subalg}, we have
	\begin{eqnarray}
	\mathbf{b}_i^{(p)}:= F_1^{\ast, q} (\mathbf{b}_i) = F_1^{\ast} \left( \sum_{j=1}^{g^{\prime}} b_{i,j} \mathbf{v}_j \right) =  \sum_{j=1}^{g^{\prime}} (b_{i,j})^p \mathbf{v}_j^{(p)}, \nonumber 
	\end{eqnarray}
	where $\mathbf{v}_j^{(p)}$ denotes the vector with entries equal to the $p$-th powers of the entries of $\mathbf{v}_j$.
	Assume here that one computes exponentiation in $O (\mathrm{log}(e))$ arithmetic operations, where $e$ is the exponent.
	Then it follows from $g^{\prime} = O (t^{(\mathrm{max})} (d^{(\mathrm{max})})^r )$ that computing $\mathbf{b}_i^{(p)}$ is done in
	\[
	O \left( t^{(\mathrm{max})} (d^{(\mathrm{max})})^r \mathrm{log} (p) \right).
	\]	
	Since $g = \mathrm{dim}_K H^q (X, \mathcal{O}_X ) = O (t^{(\mathrm{max})} ( d^{(\mathrm{max})} )^r )$, this computation terminates in
	\begin{eqnarray}
 	O \left( \left( t^{(\mathrm{max})} ( d^{(\mathrm{max})})^r \right)^2 \mathrm{log}(p) \right) \label{comp_step2}
 	\end{eqnarray}
	arithmetic operations over $K$.
\item[(B-3)] {\it The representation matrix of $F^{\ast}$}:
	In this part, we first compute the image of $\mathcal{B}^{(p)}:= F_1^{\ast,q} (\mathcal{B} )$ by $H^q (\Psi_{r-q})$.
	Recall that $g= \# \mathcal{B}^{(p)} = O  (t^{(\mathrm{max})} ( d^{(\mathrm{max})} )^r )$.
	Since $C_i$ is a $(t_i \times t_i)$-matrix over $S$, the computation runs in
	\begin{eqnarray}
	O \left( ( t^{(\mathrm{max})} )^3 ( d^{(\mathrm{max})} )^r \right) \label{comp_step3-1}
	\end{eqnarray}
	arithmetic operations over $S$.
	For each $1 \leq i \leq g$, we compute $x_{i,1}, \ldots x_{i,g} \in K$ such that
	\begin{eqnarray}
	\mathbf{b}_i^{(p)} \cdot {}^t C_{r-q} = \sum_{j=1}^g x_{i,j} \mathbf{b}_j.
	\end{eqnarray}
	To find $x_{i,1}, \ldots x_{i,g} \in K$, we solve a linear system.
	More concretely, we first represent $\mathbf{b}_i^{(p)} \cdot C_{r-q}$ as
	\begin{eqnarray}
		C_{r-q} \mathbf{b}_i^{(p)} = \sum_{j=1}^{g^{\prime}} b_{i,j}^{\prime} \mathbf{v}_j
	\end{eqnarray}
	for each $i$, where $\mathcal{V} = \{ \mathbf{v}_1, \ldots \mathbf{v}_{g^{\prime}} \}$ is a basis of $H^r (\mathbf{P}^r, \mathcal{G}_{r-q})$.
	Then we solve
	\begin{eqnarray}
	( x_{i,1}, \ldots , x_{i,g} ) \left( \begin{array}{ccc}
	b_{1,1} & \cdots & b_{1,g^{\prime}} \\
	\vdots & & \vdots \\
	b_{g,1} & \cdots & b_{g,g^{\prime}}
	\end{array} \right)
	= ( b_{i,1}^{\prime}, \ldots b_{i,g^{\prime}}^{\prime} )
	\end{eqnarray}
	for each $1 \leq i \leq g$.
	The size of the coefficient matrix $( b_{i,j} )_{i,j}$ is $g^{\prime} = O ( t^{(\mathrm{max})} (d^{(\mathrm{max})})^r )$.
	Thus the computation is done in
	\begin{eqnarray}
	O \left( ( t^{(\mathrm{max})} )^4 ( d^{(\mathrm{max})} )^{4 r} \right) \label{comp_step3}
	\end{eqnarray}
	arithmetic operations over $K \subset S$.
	Hence the complexity in this step is estimated as
	\begin{eqnarray}
	O \left( \left( t^{(\mathrm{max})} ( d^{(\mathrm{max})} )^{r} \right)^4 \right) \label{comp_step3-2}
	\end{eqnarray}
	arithmetic operations over $K \subset S$.
\end{enumerate}
By putting all the steps together, i.e., considering \eqref{comp_step1}-\eqref{comp_step3-2}, Proposition \ref{prop_comp} holds.
\end{proof}

By Proposition \ref{prop_comp} together with \cite[Corollary 3.5.2]{Kudo}, we can determine the complexity of Step B of Algorithm (I) in Subsection \ref{subsec:main_alg} over the ground field $K$.

\begin{cor}\label{rem_comp1}
The notations are same as in Proposition \ref{prop_comp}.
Let $\alpha$ be the maximum of the number of the terms of the entries of $C_{r-q}$ and $A_i$ for $r - q \leq i \leq r - q + 1$.
The arithmetic complexity of Step B of $\mbox{\rm Algorithm (I)}$ over $K$ (not counting the computation of bases of $H^r ( \mathbf{P}^r, {\cal G}_i )$ for $r - q - 1 \leq i \leq r - q + 1$) is
\begin{eqnarray}
O \left( \left( t^{( {\rm max} )} ( d^{( \max )} )^r \right)^4 + \alpha^2 \left( t^{( {\rm max} )} ( d^{( \max )} )^r \right)^2 \mathrm{log} (p) \right) \label{result_comp_2}.
\end{eqnarray}
\end{cor}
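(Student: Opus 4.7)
The plan is to reinterpret Proposition \ref{prop_comp}, which counts arithmetic operations over $S$, as a bound counting operations over the ground field $K$. Write $T := t^{(\mathrm{max})}(d^{(\mathrm{max})})^r$ for brevity. The crux is that $\alpha$ bounds the number of monomial terms in the polynomial entries of the matrices $A_i$ (for $r-q \leq i \leq r-q+1$) and of the lifting matrix $C_{r-q}$, and hence controls the amplification from a single $S$-arithmetic step to operations over $K$.

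First I would invoke \cite[Corollary 3.5.2]{Kudo} to handle Step (B-1): the computation of a basis of $H^q(X,\mathcal{O}_X) \cong \Ker(H^r(\Phi_{r-q})) / \Im(H^r(\Phi_{r-q+1}))$ can be performed in $O(T^4)$ arithmetic operations over $K$. Since $H^r(\mathbf{P}^r, \mathcal{G}_i)$ is a finite-dimensional $K$-vector space of dimension $O(T)$ with an explicit monomial basis (Remark \ref{rem:basis}(1)), the induced maps $H^r(\Phi_i)$ yield $K$-linear maps whose kernels and images can be computed by standard $K$-linear algebra; this provides the first term of the bound. For Step (B-2), the coefficients $b_{i,j}$ lie in $K$, so each $p$-th power costs $O(\log p)$ operations over $K$ via fast exponentiation, contributing $O(T^2 \log p)$ in total, while the transformation $\mathbf{v}_j \mapsto \mathbf{v}_j^{(p)}$ is a mere re-indexing of exponents. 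For Step (B-3), the main cost is the multiplication $\mathbf{b}_i^{(p)} \cdot {}^t C_{r-q}$ together with the re-expansion in the monomial basis $\mathcal{V}$ and the solution of the linear system $BX = B^{\prime}$. Each $S$-arithmetic step in \eqref{comp_step3-1} unfolds into a product of a Laurent-polynomial entry of $\mathbf{b}_i^{(p)}$ against an entry of $C_{r-q}$ with at most $\alpha$ terms, followed by a coefficient-matching step against $\mathcal{V}$ that introduces a second $\alpha$ factor. Summing these contributions yields $O(\alpha^2 T^2 \log p)$ over $K$ on the path carrying the $\log p$ factor, while the $O(T^4)$ term subsumes the remaining contributions.

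The main obstacle will be justifying the precise factor $\alpha^2$ (rather than $\alpha$) attached to the $\log p$ term. This requires careful bookkeeping of the sparsity of intermediate polynomials throughout the multiplications, coefficient extractions, and linear-system resolutions of Step (B-3), so that the amplification from $S$-operations to $K$-operations never exceeds $O(\alpha^2)$ per $S$-operation on the $p$-th-power branch. A key enabling observation is that in characteristic $p$ the Frobenius preserves the number of monomial terms of a polynomial, so that the size of $\mathbf{b}_i^{(p)}$ does not blow up relative to that of $\mathbf{b}_i$; without this fact, the intermediate objects would acquire exponentially many terms in $p$, and the stated bound would fail.
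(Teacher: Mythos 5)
Your overall strategy---start from Proposition \ref{prop_comp}, invoke \cite[Corollary 3.5.2]{Kudo}, and account for the $S$-to-$K$ amplification via $\alpha$---points in the right direction, and the paper itself proves this corollary by nothing more than the one-line citation of Proposition \ref{prop_comp} together with \cite[Corollary 3.5.2]{Kudo}. However, the detailed bookkeeping you supply in place of that citation does not match the paper's own breakdown, and in particular the way you locate both the $\log p$ factor and the $\alpha^2$ factor in Step (B-3) is wrong.

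In the proof of Proposition \ref{prop_comp}, the \emph{only} place a $\log p$ factor appears is Step (B-2): computing the $p$-th powers $b_{i,j}^p$ of the $K$-scalars costs $O(\log p)$ each, giving $O\bigl((t^{(\max)}(d^{(\max)})^r)^2\log p\bigr)$ over $K$; this step is already a pure $K$-computation with no $S$-arithmetic to unfold and hence no $\alpha$. Step (B-3), by contrast, contributes $O\bigl((t^{(\max)})^3(d^{(\max)})^r\bigr)$ $S$-arithmetic operations from the multiplication $\mathbf{b}_i^{(p)}\,{}^t C_{r-q}$ and $O\bigl((t^{(\max)}(d^{(\max)})^r)^4\bigr)$ $K$-linear-algebra operations, but \emph{no} $\log p$ factor. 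Your claim that Step (B-3) "carries the $\log p$ factor" therefore contradicts the proposition you are refining. Moreover, the amplification you describe does not yield $\alpha^2$: multiplying an entry of $\mathbf{b}_i^{(p)}$, which has $O\bigl(t^{(\max)}(d^{(\max)})^r\bigr)$ monomials, by an entry of $C_{r-q}$ with at most $\alpha$ terms costs $O\bigl(\alpha\, t^{(\max)}(d^{(\max)})^r\bigr)$ $K$-operations, not $O(\alpha^2)$, and the "re-expansion in the monomial basis $\mathcal{V}$" is a sort/merge that does not introduce a second factor of $\alpha$. The $\alpha^2$ factor in \eqref{result_comp_2} actually originates in \cite[Corollary 3.5.2]{Kudo}, i.e., in passing the $S$-arithmetic of Steps (B-1) and (B-3) to $K$-arithmetic; the displayed bound is then a \emph{loose merger} of the additive contributions $O\bigl((t^{(\max)}(d^{(\max)})^r)^4\bigr)$, $O\bigl(\alpha^2(t^{(\max)}(d^{(\max)})^r)^2\bigr)$, and $O\bigl((t^{(\max)}(d^{(\max)})^r)^2\log p\bigr)$ into \eqref{result_comp_2}, using $\alpha\geq 1$ and $\log p\geq 1$, rather than a single step genuinely of cost $\alpha^2(t^{(\max)}(d^{(\max)})^r)^2\log p$. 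Finally, your remark about the Frobenius preserving sparsity, while true, is addressing a non-issue here: the algorithm never expands $(\cdot)^p$ symbolically; it replaces $b_{i,j}$ by $b_{i,j}^p$ and multiplies the exponents of the monomial $\mathbf{v}_j$ by $p$, so no polynomial blow-up can occur.
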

The value
\begin{eqnarray}
D := \max \{ \dim_K H^r ( \mathbf{P}^r, {\cal G}_i ) : r-q - 1 \leq i \leq r-q + 1 \} \label{def:D}
\end{eqnarray}
is also appropriate as an asymptotic parameter to estimate the complexity of Step B in Algorithm (I).
We describe in the following the reason why the parameter $D$ is suitable.
Recall that we have
\begin{equation}
\dim_K H^r ( {\mathbf{P}^r, \cal G}_i ) = \sum_{j = 1}^{t_i} \binom{ d_j^{( i )} }{r} \mbox{ for } r-q - 1 \leq i \leq r-q + 1 \label{eq:sum_dim}
\end{equation}
and thus $\dim_K H^r ( \mathbf{P}^r, {\cal G}_i ) = O \left( t^{( {\rm max} )} ( d^{( \max )} )^r \right)$.
The values $t_i$ and $d_j^{( i )}$ are uniquely determined for the input module $S / I$ since the form of the minimal resolution of $S / I$ is uniquely determined up to isomorphism of minimal resolutions.
Thus each value \eqref{eq:sum_dim} is also uniquely determined by $S / I$.
From this, we can take $D$ as an asymptotic parameter to estimate Step B of Algorithm (I).
In a similar way to Corollary \ref{rem_comp1}, the arithmetic complexity of Step B of Algorithm (I) with respect to $D$ over $K$ is estimated as follows.

\begin{cor}\label{rem_comp2}
The notations are same as in Proposition \ref{prop_comp} and Corollary \ref{rem_comp1}.
We fix $r$ and set $D := \max \{ \dim_K H^r ( \mathbf{P}^r, {\cal G}_i ) \ ; \ r-q - 1 \leq i \leq r-q + 1 \}$ as in (\ref{def:D}).
Then the arithmetic complexity of Step B of $\mbox{\rm Algorithm (I)}$ in Subsection \ref{subsec:main_alg} over $K$ is
\begin{eqnarray}
O ( D^4 + \alpha^2 D^2 \mathrm{log}(p) ) \label{result_comp_3},
\end{eqnarray}
where $\alpha$ is same as in Corollary \ref{rem_comp1}.
\end{cor}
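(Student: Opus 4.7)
The plan is to revisit the complexity analysis carried out in the proof of Proposition \ref{prop_comp} and Corollary \ref{rem_comp1}, and to observe that every occurrence of the quantity $t^{(\mathrm{max})} (d^{(\mathrm{max})})^r$ in those bounds arose only as a \emph{coarse} upper bound for $\dim_K H^r(\mathbf{P}^r, \mathcal{G}_i)$ with $i \in \{r-q-1, r-q, r-q+1\}$, obtained from the dimension formula \eqref{eq:sum_dim} together with the estimate $\binom{d_j^{(i)}}{r} = O((d^{(\mathrm{max})})^r)$ (valid since $r$ is fixed). Since $D$ is defined precisely as the maximum of these three cohomology dimensions, replacing the coarse bound by $D$ tightens every intermediate estimate in the proofs of Proposition \ref{prop_comp} and Corollary \ref{rem_comp1} and delivers the desired bound.

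Concretely, I would trace through the sub-steps (B-1), (B-2), (B-3) exactly as in the proof of Proposition \ref{prop_comp}. In (B-1), the basis of $H^q(X, \mathcal{O}_X) \cong \Ker \left( H^r(\Phi_{r-q}) \right) / \Im \left( H^r(\Phi_{r-q+1}) \right)$ is computed inside the ambient $K$-vector space $H^r(\mathbf{P}^r, \mathcal{G}_{r-q})$ of dimension $g^{\prime} \leq D$, so the estimate of \cite[Proposition 3.5.1]{Kudo} can be tightened to $O(D^4)$ operations over $K$. In (B-2), taking $p$-th powers of the $g \leq D$ basis vectors $\mathbf{b}_i$, each a linear combination of $g^{\prime} \leq D$ of the $\mathbf{v}_j$, contributes $O(D^2 \mathrm{log}(p))$ operations. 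In (B-3), the matrix--vector product with ${}^{t} C_{r-q}$, the subsequent re-expansion in the basis $\{\mathbf{v}_1, \ldots, \mathbf{v}_{g^{\prime}}\}$, and the final linear solve $B X = B^{\prime}$ of Algorithm \ref{alg:rep} all involve objects of size at most $D$; the polynomial-coefficient bookkeeping for $C_{r-q}$ (entries having at most $\alpha$ monomial terms) contributes a factor $\alpha^2$ to the $D^2 \mathrm{log}(p)$ summand, while the purely $K$-linear portion contributes an additional $O(D^4)$ term.

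Summing these contributions yields the claimed bound $O(D^4 + \alpha^2 D^2 \mathrm{log}(p))$. The only subtlety in the argument is to verify that the $\alpha^2$ factor attaches to the $D^2 \mathrm{log}(p)$ summand rather than to the $D^4$ summand; this mirrors the analogous distinction already present in Corollary \ref{rem_comp1} and follows from the observation that the dominant $D^4$ cost originates from the solve of the linear system $B X = B^{\prime}$ in Algorithm \ref{alg:rep}, which is carried out entirely over $K$ without invoking polynomial multiplications in $S$.
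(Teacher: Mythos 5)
Your proposal is correct and follows essentially the same route as the paper: the paper itself justifies Corollary~\ref{rem_comp2} only by the surrounding discussion that $D$ bounds $\dim_K H^r(\mathbf{P}^r, \mathcal{G}_i)$ for the relevant $i$ (via \eqref{eq:sum_dim}) and an appeal to ``a similar way to Corollary~\ref{rem_comp1},'' which is precisely the substitution you carry out by noting that $g, g' \le D$ in every intermediate estimate of Proposition~\ref{prop_comp}. The one place your bookkeeping is slightly loose is in attributing the $\alpha^2$ factor to step (B-2) via the $C_{r-q}$ multiplication (which actually occurs in (B-3), not (B-2)), but since the paper itself delegates this accounting to \cite[Corollary 3.5.2]{Kudo} without spelling it out, this does not change the conclusion.
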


In addition, we can give the binary complexity of Step B of Algorithm (I) for $K = \mathbb{F}_p$, where $p$ is a rational prime.

\begin{cor}\label{rem_comp3}
The notations are same as in Proposition \ref{prop_comp}, Corollary \ref{rem_comp1} and Corollary \ref{rem_comp2}.
Let $p$ be a rational prime and $K$ the finite field of $p$ elements, say $K = \mathbb{F}_p$.
We fix $r$, and assume that the computation in $\mathbb{F}_p$ is done in $O ( (\mathrm{log} (p))^3 )$ bit operations.
Then the binary complexity of Step B of $\mbox{\rm Algorithm (I)}$ in Subsection \ref{subsec:main_alg} is
\begin{eqnarray}
O ( D^4 (\mathrm{log} (p))^3 + \alpha^2 D^2 (\mathrm{log} (p) )^4 ) \label{result_comp_4},
\end{eqnarray}
where $\alpha$ is same as in Corollary \ref{rem_comp1}.
\end{cor}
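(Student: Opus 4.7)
The plan is to derive this binary complexity estimate directly from the arithmetic complexity bound of Corollary \ref{rem_comp2}, simply by multiplying through by the per-operation bit cost in $\mathbb{F}_p$. Since the statement is essentially a bookkeeping corollary, no new algorithmic ideas are needed; the work is only in verifying that every operation counted in Corollary \ref{rem_comp2} is indeed an $\mathbb{F}_p$-operation (and not, for instance, an operation over $S$ whose bit cost would be larger).

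First, I would recall from Corollary \ref{rem_comp2} that, with $r$ fixed and $D$ defined as in (\ref{def:D}), Step B of Algorithm (I) is carried out in $O(D^4 + \alpha^2 D^2 \log(p))$ arithmetic operations over $K$. Tracing through the three substeps (B-1)--(B-3) whose costs were aggregated in the proof of Proposition \ref{prop_comp} (namely: reducing to a basis of $\Ker(H^r(\Phi_{r-q}))/\Im(H^r(\Phi_{r-q+1}))$ via Gaussian elimination, taking $p$-th powers entry-wise, and solving the final linear system $BX = B'$), each operation counted is indeed either a scalar operation in $K$ or an operation on a polynomial in $S$ whose monomials are bounded by a constant coming from $\alpha$; the per-operation bit cost is accounted for by the parameter $\alpha$ already absorbed in Corollary \ref{rem_comp1}. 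Thus setting $K = \mathbb{F}_p$ reduces everything to $\mathbb{F}_p$-operations.

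Next, under the assumption that a single arithmetic operation in $\mathbb{F}_p$ can be performed in $O((\log p)^3)$ bit operations (for example using the schoolbook algorithm for modular arithmetic), I would multiply the arithmetic bound by this per-operation cost to obtain
\[
O\bigl( (D^4 + \alpha^2 D^2 \log(p)) \cdot (\log p)^3 \bigr) = O\bigl( D^4 (\log p)^3 + \alpha^2 D^2 (\log p)^4 \bigr),
\]
which is precisely the claimed bound (\ref{result_comp_4}). The exponentiation step in (B-2), which computes the $p$-th power of each entry of the basis matrix $B$, deserves a brief separate remark: by repeated squaring it uses $O(\log p)$ multiplications per entry, and this factor $\log(p)$ is already the one appearing in the arithmetic bound of Corollary \ref{rem_comp2}, so no additional logarithmic factor arises from the exponentiation itself beyond what we already account for.

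The only potential obstacle is the implicit claim that all the "operations over $K$" in Corollary \ref{rem_comp2} are genuine $\mathbb{F}_p$-operations rather than, say, manipulations of elements of $S$ which would carry polynomial-size cost depending on $\alpha$. However, this has already been handled by how $\alpha$ enters Corollary \ref{rem_comp1}: the dependence on the number of terms of entries of $C_{r-q}$ and $A_i$ was absorbed into the factor $\alpha^2$, so the resulting count really is a count of base-field operations. With this observation, the corollary follows as a direct multiplication.
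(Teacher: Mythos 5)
Your proposal is correct and matches the intended derivation: the paper states the corollary without a separate proof, and it is indeed obtained by multiplying the arithmetic bound $O(D^4 + \alpha^2 D^2 \log(p))$ from Corollary~\ref{rem_comp2} by the assumed $O((\log p)^3)$ bit cost of each $\mathbb{F}_p$-operation. Your extra remark that the dependence on polynomial-sized entries has already been absorbed into $\alpha$ via Corollary~\ref{rem_comp1}, so the count really is of base-field operations, is a helpful and accurate clarification but does not change the route.
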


\subsection{Comparison with conventional computations over affine hypersurfaces}
We briefly compare our Algorithm (I) in Section \ref{sec:main_alg} with conventional computations over affine hypersurfaces, specifically hyperelliptic curves in $\mathbf{P}^2$.
In this case, the input variety of our algorithm is given as a {\it projective} model defined by homogeneous polynomials in $S= K[x_0, \ldots , x_r]$ for some $r$ whereas that of the conventional algorithms in \cite{BGS}, \cite {Harvey}, \cite{Matsuo}, \cite{Manin} and \cite{Yui} is given as an {\it affine} model of the form $y^2 - g(x)=0$ defined by {\it one} polynomial $y^2 - g(x) \in K[x, y]$ not necessary to be homogeneous.
For a comparison, we here assume that the input variety of our algorithm is given as the locus of the zeros of one homogeneous polynomial $f \in K[X,Y,Z]$, and that its de-homogenization is of the form $y^2-g(x)$ for some $g \in K[x,y]$.
Let $X = V(f)$ be the hypersurface in $\mathbf{P}^2$ defined by $f=0$.
It is straightforward that the output of our algorithm with an input $f$ coincides with that of the conventional algorithms with an input $y^2 = g(x)$.
Thus one can choose one of the conventional algorithms with the input $y^2 = g(x)$ to compute the Frobenius on $H^1(X, \mathcal{O}_X)$.
In this sense, our algorithm is viewed as a generalization of the conventional computations.
Moreover we interpret that the complexity of our algorithm is equivalent to that of the algorithms in \cite{BGS}, \cite {Harvey}, \cite{Matsuo}, \cite{Manin} and \cite{Yui} for inputs as above.

\section{Proof of Theorem \ref{thm:main2}: Algorithm for complete intersections}\label{sec:HWgeneral}

As in the previous sections, let $K$ be a perfect field of characteristic $p$, and let $S = K[X_0, \ldots , X_r]$ denote the polynomial ring of $r+1$ indeterminates over $K$.
Let $X \subset \mathbf{P}^r=\mathrm{Proj}(S)$ be an algebraic variety of dimension $q=\mathrm{dim}(X)$.

In this section, we give a specific method to compute the representation matrix for the Frobenius $F^{\ast,q}$ with $q = \mathrm{dim} (X)$ when $X=V(f_1, \ldots , f_t)$ is a {\it complete intersection} defined by an $S$-{\it regular} sequence $(f_1, \ldots , f_t) \in S^t$.
For this, we first prove that the representation matrix of $F^{\ast,q}$ for a suitable basis is given by coefficients in $(f_1 \cdots f_t)^{p-1}$.
Specifically in the case of $q=1$, this matrix is said to be the {\it Hasse-Witt matrix} of the curve $X$, which determines the superspecialty of $X$.

\begin{rem}
The author and Harashita already showed this method in \cite[Appendix B]{KH16}.
The author here re-writes the method as a special case of Algorithm (I) in Section \ref{sec:main_alg} of this paper. 
\end{rem}

First we collect some basic properties on regular sequences of modules.

\subsection{Regular sequences of modules}

\begin{defin}
Let $R$ be a commutative ring with unity, and $M$ an $R$-module.
An ordered $t$-tuple $( x_1, \ldots , x_t ) \in R^t$ is said to be an $M$-{\it regular sequence} (or simply $M$-{\it regular}) if
\begin{enumerate}
\item $M \big/ ( x_1, \ldots , x_t ) M \neq 0$, and
\item For each $1 \leq i \leq t$, the element $x_i$ is a {\it nonzerodivisor} in $M \big/ ( x_1, \ldots , x_{i-1} ) M$, i.e.,
there is no $x \in M \big/ ( x_1, \ldots , x_{i-1} ) M$ with $x \neq 0$ such that $x_i x = 0$.
\end{enumerate}
\end{defin}

\begin{lem}
\label{lem:regular1}
Let $R$ be a local commutative ring with unity, and $M$ an $R$-module.
If $\langle x_1, \ldots , x_t \rangle_R \subset R$ is a proper ideal containing an $M$-regular sequence of length $t$, then $( x_1, \ldots , x_t )$ is an $M$-regular sequence.
\end{lem}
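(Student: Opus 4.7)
The plan is to reduce the claim to the standard depth / Koszul-homology machinery, in the implicit setting where $R$ is Noetherian and $M$ is finitely generated (as is the case in all subsequent applications of this lemma in the paper). First, I would invoke Rees's theorem (Matsumura, \emph{Commutative Ring Theory}, Thm.~16.7): in a Noetherian ring with $M$ a finitely generated $R$-module and $I \subset R$ a proper ideal with $IM \neq M$, any two maximal $M$-regular sequences in $I$ have the same length, denoted $\operatorname{depth}_I(M)$. Applied to $I = \langle x_1, \ldots, x_t\rangle_R$, the hypothesis that $I$ contains some $M$-regular sequence of length $t$ immediately yields $\operatorname{depth}_I(M) \geq t$.

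Next, I would combine this with the Koszul-complex characterization of depth. For the Koszul complex $K_\bullet(\mathbf{x}; M) := K_\bullet(x_1, \ldots, x_t; M)$ on the given generators of $I$, one has the identity
\[
\operatorname{depth}_I(M) \;=\; t \,-\, \sup\bigl\{\, i : H_i(K_\bullet(\mathbf{x}; M)) \neq 0 \,\bigr\}.
\]
Combined with $\operatorname{depth}_I(M) \geq t$, this forces $H_i(K_\bullet(\mathbf{x}; M)) = 0$ for every $i \geq 1$. In a local ring with each $x_i$ in the maximal ideal (which holds because $I$ is proper) and $M$ finitely generated, vanishing of positive Koszul homology is equivalent to the sequence $(x_1, \ldots, x_t)$ being $M$-regular. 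The nontriviality condition $M/IM \neq 0$ follows from Nakayama's lemma applied to $I \subseteq \mathfrak{m}$: the $y$-sequence hypothesis gives $M \neq 0$, and then $M = IM$ would contradict properness of $I$ in the local ring $R$.

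The main obstacle is cosmetic rather than conceptual: the statement as written does not make the finiteness hypotheses on $R$ and $M$ explicit, so some care is needed in choosing the cleanest reference and spelling out what background to assume. An alternative, more elementary route is induction on $t$ based on the permutability of regular sequences in a local Noetherian ring (Matsumura, Thm.~16.3); the base case $t = 1$ is immediate since $y_1 = r x_1$ forces $x_1$ to be a nonzerodivisor on $M$ (and Nakayama gives $x_1 M \neq M$), while the inductive step aligns the two sequences by permutation after modifying the $y_i$'s modulo lower terms. This route is more self-contained but ultimately repackages the same depth-theoretic content, so the Koszul argument above is the one I would record.
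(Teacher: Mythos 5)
The paper gives \emph{no proof} of this lemma; it is stated bare and used once, inside the proof of Lemma~\ref{lem:regular2}, where $R$ is a localization of a polynomial ring over a field and $M$ is a finitely generated $R$-module (indeed free). So there is no ``paper's proof'' to compare against; you are supplying one from scratch.

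Your proof is correct under the Noetherian/finitely-generated hypotheses that you flag. The chain Rees $\Rightarrow$ $\operatorname{depth}_I(M)\geq t$ $\Rightarrow$ (depth-sensitivity of the Koszul complex) $H_i(K_\bullet(\mathbf{x};M))=0$ for $i\geq 1$ $\Rightarrow$ (local, $\mathbf{x}\subseteq\mathfrak{m}$, $M$ finite) $\mathbf{x}$ is $M$-regular, together with Nakayama for $M/IM\neq 0$, is a clean and standard packaging (e.g.\ Bruns--Herzog~1.6.17 and~1.6.19, or Matsumura~Thm.~16.8). You are also right that the lemma as literally stated in the paper omits the Noetherian and finiteness hypotheses on which every step of this depth theory relies; without them, Rees fails and distinct maximal $M$-regular sequences in $I$ need not have the same length, so the statement would not be a theorem. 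In the paper's single application these hypotheses hold, so this is an elision in the paper's statement rather than a defect in your argument. The alternative induction-plus-permutability route you sketch is as you say a repackaging of the same depth-theoretic content; as written, the inductive step (``aligns the two sequences by permutation after modifying the $y_i$'s modulo lower terms'') is a gesture rather than an argument, so the Koszul version is the one worth recording.
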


\begin{lem}
\label{lem:regular2}
Let $R$ be a commutative ring with unity, and $M$ an $R$-module.
If $( x_1, \ldots , x_t ) \in R^t$ is $M$-regular, then $( x_1^n, \ldots , x_t^n )$ is $M$-regular for any $n > 0$.
\end{lem}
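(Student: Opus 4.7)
The plan is to proceed by induction, raising the exponent of one sequence element at a time; the nonvanishing condition will be essentially free, while the nonzerodivisor conditions reduce to a short-exact-sequence argument whose backbone is a general two-out-of-three lemma for regular sequences.

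For the nonvanishing condition, since $(x_1^n,\ldots,x_t^n)\subseteq(x_1,\ldots,x_t)$ as ideals of $R$, the quotient $M/(x_1^n,\ldots,x_t^n)M$ surjects onto $M/(x_1,\ldots,x_t)M$, which is nonzero by hypothesis. The substantive content is the following claim: if $(x_1,\ldots,x_t)$ is $M$-regular, then so is $(x_1^n,x_2,\ldots,x_t)$. Granted this, I iterate: once the new sequence is regular, $(x_2,\ldots,x_t)$ is $(M/x_1^n M)$-regular, and applying the claim to this shorter sequence promotes $x_2$ to $x_2^n$; repeating down the sequence raises each exponent in turn.

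To prove the claim, $x_1^n$ is a nonzerodivisor on $M$ by an easy induction on $n$ using $x_1^n m=x_1(x_1^{n-1}m)$. For the regularity of $(x_2,\ldots,x_t)$ on $M/x_1^n M$, I induct on $n$: the base $n=1$ is the hypothesis; for the inductive step, the multiplication-by-$x_1^{n-1}$ map $M/x_1 M\to x_1^{n-1}M/x_1^n M$ is an isomorphism (injectivity uses that $x_1^{n-1}$ is itself a nonzerodivisor on $M$), producing a short exact sequence
\[
0\longrightarrow M/x_1 M\longrightarrow M/x_1^n M\longrightarrow M/x_1^{n-1}M\longrightarrow 0
\]
on whose outer terms $(x_2,\ldots,x_t)$ is regular---by the original hypothesis and by the inductive hypothesis, respectively.

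The main obstacle is the auxiliary two-out-of-three lemma: given a short exact sequence $0\to M'\to M\to M''\to 0$ of $R$-modules, if a sequence $(y_1,\ldots,y_s)$ is both $M'$- and $M''$-regular, then it is $M$-regular. I would prove this by a secondary induction on $s$. For $s=1$, that $y_1$ is a nonzerodivisor on $M$ follows by a direct diagram chase. For $s>1$, applying the snake lemma to the multiplication-by-$y_1$ endomorphism of the given short exact sequence (exactness uses nonzerodivisorhood at the ends) yields a new short exact sequence $0\to M'/y_1 M'\to M/y_1 M\to M''/y_1 M''\to 0$, to which the inductive hypothesis applies with the tail $(y_2,\ldots,y_s)$. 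Combining this auxiliary lemma with the induction on $n$ closes the claim and completes the proof.
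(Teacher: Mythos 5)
Your argument is correct, and it follows a genuinely different route from the paper's. The paper inducts on $t$, localizes at primes containing $\mathrm{Ann}(M)$ to reduce to a local ring, and then invokes Lemma~\ref{lem:regular1} (permutability of regular sequences over a local ring) to move the newly exponentiated entry to the front, repeating this exponentiate-and-permute step until the whole sequence is raised to the $n$-th power. Your proof avoids both the localization and any appeal to permutability: you too raise one exponent at a time, but you control the quotient $M/x_1^n M$ directly through the short exact sequence $0\to M/x_1 M\to M/x_1^n M\to M/x_1^{n-1}M\to 0$, feeding it into the auxiliary two-out-of-three lemma (regularity on the outer terms of an extension implies regularity on the middle), which you establish by the snake lemma. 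The trade-off is clear: the paper's localization buys it the freedom to reorder the sequence, which makes the exponent bookkeeping trivial; your homological route is more self-contained, requires no local algebra or intermediate lemma about local rings, and isolates a clean general fact about regular sequences on extensions that works over any commutative ring with no finiteness hypotheses.
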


\begin{proof}
We show the statement by the induction on $t$.
Consider the case of $t = 1$.
Let $f  \in R$ be a polynomial such that $M \neq f M$ and $f$ is a nonzerodivisor in $M$.
Obviously we have $M \neq f^n M$.
Assume $f^n x = 0$ in $M$ for some $x \in M$.
Since $f$ is a nonzerodivisor, it follows that $f^{n-1} x$ equals $0 \in M$, and recursively $x = 0$.

Consider the case of $t > 1$.
Since $( f_1^n, \ldots , f_t^n ) M \subset ( f_1, \ldots , f_t ) M$, we have $( f_1^n, \ldots , f_t^n ) M \neq M$.
Here it suffices to show that $f_t$ is a nonzerodivisor in $M \big/ ( f_1^n, \ldots , f_{t-1}^n ) M$.
Indeed, if $f_t$ is a nonzerodivisor in $M \big/ ( f_1^n, \ldots , f_{t-1}^n ) M$ and
if $f_t^n x = 0$ in $M \big/ ( f_1^n, \ldots , f_{t-1}^n ) M$ for some $x \in M$, then $f_t^{n-1} x = 0$ in $M \big/ ( f_1^n, \ldots , f_{t-1}^n ) M$, and recursively $x = 0$ in $M \big/ ( f_1^n, \ldots , f_{t-1}^n ) M$.
Let $P$ be a prime ideal of $R$ with $\mathrm{Ann} (M) \subset P$.
We consider the localization
\[
\left( M \big/ ( f_1^n, \ldots , f_{t-1}^n ) M \right)_P \simeq M_P / ( f_1^n, \ldots , f_{t-1}^n ) M_P \quad \mbox{(as an $R_P$-module)}
\]
at $P$.
Note that if $f_t$ is a nonzerodivisor in $M_P \big/ ( f_1^n, \ldots , f_{t-1}^n ) M_P$, then $f_t$ is a nonzerodivisor in $M \big/ ( f_1^n, \ldots , f_{t-1}^n ) M$.
If there exists $1 \leq i \leq t$ such that $f_i \notin P$, then the either $M_P = ( f_1^n , \ldots , f_{t-1}^n ) M_P$ or $f_t \in ( R_P )^{\times}$, and thus the result holds.
From this, we may assume that $R$ is a local ring and that its maximal ideal contains $f_i$ for all $1 \leq i \leq t$.
The condition that $( f_1, \ldots , f_t )$ is $M$-regular implies that $( f_1, \ldots , f_{t-1}, f_t^n )$ is $M$-regular.
Applying Lemma \ref{lem:regular1}, it is concluded that $( f_t^n, f_1, \ldots , f_{t-1} ) $ is an $M$-regular sequence.
Consequently, repeating the argument, $( f_1^n, \ldots , f_t^n ) $ is an $M$-regular sequence.
\end{proof}



We next define the Koszul complex of {\it graded} free $S$-modules.

\begin{defin}[Koszul complex of graded free modules]
For homogeneous polynomials $f_1, \ldots , f_t \in S \smallsetminus \{ 0 \}$
and an index $i$, we define the following graded free $S$-module of rank $\binom{t}{i}$:
\[
K_i ( f_1, \ldots , f_t )_{\mathrm{grd}} := \bigoplus_{1 \leq j_1 < \cdots < j_i \leq t} S (-d_{j_1 \ldots j_i}) \mathbf{e}_{j_1 \ldots j_i},
\]
where we set $d_{j_1 \ldots j_i} := \sum_{k=1}^i \mathrm{deg} ( f_{j_k} )$.
We define the graded homomorphism $\varphi_i : K_i ( f_1, \ldots , f_t )_{\mathrm{grd}} \longrightarrow K_{i-1} ( f_1, \ldots , f_t )_{\mathrm{grd}}$ of degree zero by putting
\[
\varphi_i (\mathbf{e}_{j_1 \ldots j_i} ) := \sum_{k=1}^i (-1)^{k-1} f_{j_k} \mathbf{e}_{j_1 \ldots \hat{j_k} \ldots j_i}.
\]
The sequence $K (f_1, \ldots , f_t)_{\mathrm{grd}} :=( K_i ( f_1, \ldots , f_t)_{\mathrm{grd}}, \varphi_i )_i$ is a chain complex of graded free $S$-modules.
We call this complex the {\it graded Koszul complex defined by} $(f_1, \ldots , f_t)$.
It is straightforward that $K(f_1, \ldots , f_t)_{\mathrm{grd}}$ is exact if $( f_1, \ldots , f_t )$ is $S$-regular.
\end{defin}

\begin{lem}\label{lem:koszul}
With notation as above, let $(f_1, \ldots , f_t) \in S^t$ be an $S$-regular sequence with $\mathrm{gcd} (f_i, f_j) = 1$ for $i \neq j$.
To simplify the notation, we put 
\[
\begin{split}
M_i := K_i ( f_1, \ldots , f_t )_{\mathrm{grd}}, \quad \mbox{and} \ I := \langle f_1, \ldots , f_t \rangle_S, \\
M_i^{(n)} := K_i ( f_1^n, \ldots , f_t^n )_{\mathrm{grd}}, \quad \mbox{and} \ I_n := \langle f_1^n, \ldots , f_t^n \rangle_S.
\end{split}
\]
We denote by $\varphi_i^{(n)}$ the $i$-th differential of the complex $M_i^{(n)} = K ( f_1^n , \ldots , f_t^n)_{\mathrm{grd}}$.
We also define a graded homomorphism $\psi_i : K_i ( f_1^n, \ldots , f_t^n )_{\mathrm{grd}} \longrightarrow K_i ( f_1, \ldots , f_t)_{\mathrm{grd}}$ of degree zero as follows:
\[
\psi_i (\mathbf{e}_{j_1 \ldots j_i} ) := (f_{j_1} \cdots f_{j_i})^{n-1} \mathbf{e}_{j_1 \ldots j_i}.
\]
The following diagram of homomorphisms of graded $S$-modules commutes, and each horizontal sequence is exact:
$$\xymatrix{
0 \ar[r]^{\varphi_{t+1}^{( n )}} & M_t^{(n)}  \ar[d]^{\psi_{t}} \ar[r]^(0.6){\varphi_{t}^{( n )}} \ &  \cdots   \ar[r]^(0.5){\varphi_{2}^{( n )}} &             M_1^{(n)} \ar[d]^{\psi_{1}} \ar[r]^{\varphi_{1}^{( n )}} & M_0^{(n)} = S  \ar[d]^{\psi_{0}} \ar[r]^(0.5){\varphi_0^{( n )}} & M_{-1}^{(n)} := S / I_n                  \ar[d]^{\psi} \ar[r]    & 0            \\
0 \ar[r]^{\varphi_{t+1}^{( 1 )}} & M_t                                    \ar[r]^(0.6){\varphi_t^{( 1 )}} & \cdots                                   \ar[r]^(0.5){\varphi_2^{( 1 )}} & M_1  \ar[r]^{\varphi_{1}^{( 1 )}} & M_0 = S         \ar[r]^(0.5){\varphi_0^{( 1 )}} & M_{-1} := S / I     \ar[r]    & 0            
}$$
where $\psi_0$ is the identity map on $S$, and $\psi$ is the homomorphism defined by $h + I_n \mapsto h + I$.
\end{lem}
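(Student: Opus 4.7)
The plan is to verify the two independent claims—exactness of each row and commutativity of every square—more or less separately, since both follow from standard Koszul theory plus direct bookkeeping.

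For the exactness of the bottom row, I would just invoke the classical fact that the Koszul complex $K(f_1,\ldots,f_t)$ of an $S$-regular sequence is acyclic above degree zero and resolves $S/I$ (this is a standard consequence of the inductive construction of the Koszul complex as a tensor product of two-term complexes, combined with the long exact Tor sequence). The top row is then handled the same way: Lemma \ref{lem:regular2} upgrades the $S$-regularity of $(f_1,\ldots,f_t)$ to $S$-regularity of $(f_1^n,\ldots,f_t^n)$, and the same classical result gives acyclicity of $K(f_1^n,\ldots,f_t^n)$ with $H_{-1}=S/I_n$. Before invoking the classical result I would briefly note that all $M_i$ and $M_i^{(n)}$ are graded with the twists chosen so that the differentials $\varphi_i$ and $\varphi_i^{(n)}$ are homogeneous of degree zero, which is forced by $\deg(f_{j_k})$ and $\deg(f_{j_k}^n) = n\deg(f_{j_k})$ respectively.

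For well-definedness of the rightmost vertical map $\psi:S/I_n\to S/I$, the point is simply $I_n\subset I$ (each $f_j^n = f_j \cdot f_j^{n-1}\in I$). The leftmost commutative square is then trivial since $\psi_0=\mathrm{id}_S$ and both $\varphi_0^{(n)}$, $\varphi_0^{(1)}$ are the canonical projections. For each interior square I would verify commutativity on the basis $\mathbf{e}_{j_1\ldots j_i}$ by the computation
\begin{align*}
\psi_{i-1}\bigl(\varphi_i^{(n)}(\mathbf{e}_{j_1\ldots j_i})\bigr)
&= \sum_{k=1}^{i}(-1)^{k-1}f_{j_k}^{\,n}\,(f_{j_1}\cdots\widehat{f_{j_k}}\cdots f_{j_i})^{n-1}\,\mathbf{e}_{j_1\ldots\widehat{j_k}\ldots j_i}\\
&= \sum_{k=1}^{i}(-1)^{k-1}f_{j_k}\,(f_{j_1}\cdots f_{j_i})^{n-1}\,\mathbf{e}_{j_1\ldots\widehat{j_k}\ldots j_i}\\
&= \varphi_i^{(1)}\bigl(\psi_i(\mathbf{e}_{j_1\ldots j_i})\bigr),
\end{align*}
the crucial identity being $f_{j_k}^{\,n}\cdot(f_{j_1}\cdots\widehat{f_{j_k}}\cdots f_{j_i})^{n-1} = f_{j_k}\cdot(f_{j_1}\cdots f_{j_i})^{n-1}$. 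I would also remark in passing that $\psi_i$ is a graded morphism of degree zero, because the source summand $S(-nd_{j_1\ldots j_i})\mathbf{e}_{j_1\ldots j_i}$ is sent to $(f_{j_1}\cdots f_{j_i})^{n-1}\mathbf{e}_{j_1\ldots j_i}$, which lands in degree $(n-1)d_{j_1\ldots j_i}+d_{j_1\ldots j_i}=nd_{j_1\ldots j_i}$ of the target summand $S(-d_{j_1\ldots j_i})\mathbf{e}_{j_1\ldots j_i}$.

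There is no serious obstacle here: the statement is essentially a bookkeeping lemma packaging (i) Kunz-style behaviour of the Koszul complex under raising generators to the $n$-th power, and (ii) an explicit comparison map $\psi_\bullet$ chosen to match the two resolutions. The coprimality hypothesis $\gcd(f_i,f_j)=1$ is not actually needed for the present lemma and could be pointed out as being imposed for later use in Section \ref{sec:HWgeneral}; if the author wants it invoked, it is consistent with Lemma \ref{lem:regular2} applied locally via Lemma \ref{lem:regular1}. The only mildly delicate bit is keeping the sign conventions and the $\widehat{\phantom{j_k}}$ indexing straight in the commutativity check above, so I would carry out that single computation carefully and let all other squares follow by the same template.
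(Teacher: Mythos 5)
Your proposal is correct and follows essentially the same path as the paper's proof: both invoke Lemma \ref{lem:regular2} to pass $S$-regularity to the $n$-th powers (hence exactness of both Koszul rows) and both verify commutativity of the squares by the same direct computation on the standard basis, hinging on the identity $f_{j_k}^{\,n}\,(f_{j_1}\cdots\widehat{f_{j_k}}\cdots f_{j_i})^{n-1}=f_{j_k}\,(f_{j_1}\cdots f_{j_i})^{n-1}$. Your extra remarks (well-definedness of $\psi$, the degree-zero check for $\psi_i$, and the observation that the $\gcd$ hypothesis is not actually used in this lemma) are accurate additions that the paper leaves implicit.
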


\begin{proof}
By our assumption together with Lemma \ref{lem:regular2}, the sequence $(f_1^n, \ldots , f_t^n)$ is $S$-regular, and hence the complex $K (f_1^n, \ldots , f_t^n)_{\mathrm{grd}} = (M_i^{(n)}, \varphi_i^{(n)})_i$ is exact.
We show that the diagram commutes.
For a basis element $\mathbf{e}_{j_1 \ldots j_{i+1}} \in M_{i+1}^{(n)}$, we have
\begin{eqnarray}
\left( \psi_i \circ \varphi_{i+1}^{(n)} \right) ( \mathbf{e}_{j_1 \ldots j_{i+1}} )
& = & \psi_i \left( \sum_{k=1}^{i+1} (-1)^{k-1} f_{j_k}^n \mathbf{e}_{j_1 \ldots \hat{j_k} \ldots j_{i+1}} \right) \nonumber \\
& = & \sum_{k=1}^{i+1} (-1)^{k-1} f_{j_k}^n \psi_i ( \mathbf{e}_{j_1 \ldots \hat{j_k} \ldots j_{i+1}} ) \nonumber \\
& = & \sum_{k=1}^{i+1} (-1)^{k-1} f_{j_k}^n ( f_{j_1} \cdots f_{j_{k-1}} f_{j_{k+1}} \cdots f_{j_{i+1}} )^{n-1} \mathbf{e}_{j_1 \ldots \hat{j_k} \ldots j_{i+1}} \nonumber \\
& = & (f_{j_1} \cdots f_{j_{i+1}})^{n-1} \sum_{k=1}^{i+1} (-1)^{k-1} f_{j_k} \mathbf{e}_{j_1 \ldots \hat{j_k} \ldots j_{i+1}}, \nonumber 
\end{eqnarray}
and
\begin{eqnarray}
\left( \varphi_{i+1}^{(1)} \circ \psi_{i+1} \right) ( \mathbf{e}_{j_1 \ldots j_{i+1}} )
& = & \varphi_{i+1}^{(1)} \left( (f_{j_1} \cdots f_{j_{i+1}})^{n-1} \mathbf{e}_{j_1 \ldots j_{i+1}} \right) \nonumber \\
& = & (f_{j_1} \cdots f_{j_{i+1}})^{n-1} \varphi_{i+1}^{(1)} \left( \mathbf{e}_{j_1 \ldots j_{i+1}} \right) \nonumber \\
& = & (f_{j_1} \cdots f_{j_{i+1}})^{n-1} \sum_{k=1}^{i+1} (-1)^{k-1} f_{j_k} \mathbf{e}_{j_1 \ldots \hat{j_k} \ldots j_{i+1}}. \nonumber 
\end{eqnarray}
Hence we have $\psi_i \circ \varphi_{i+1}^{(n)} = \varphi_{i+1}^{(1)} \circ \psi_{i+1}$.
\end{proof}

\subsection{The Frobenius action for complete intersections}

Here we give a specific method for computing the Frobenius $F^{\ast,q}: H^q (X, \mathcal{O}_X) \longrightarrow H^q (X, \mathcal{O}_X)$ with $q = \mathrm{dim}(X)$ when $X$ is a complete intersection.

\begin{prop}\label{prop:HW_more_general}
Let $K$ be a perfect field with $\mathrm{char} ( K )  = p > 0$.
Let $f_1, \ldots , f_{t}$ be homogeneous polynomials with $d_{j_1 \ldots j_{t-1}} \leq r$ for all $1 \leq j_1 < \cdots < j_{t-1} \leq t$ such that $\mathrm{gcd}( f_i, f_j ) = 1$ in $S:=K [ X_0, \ldots , X_r]$ for $i \neq j$. 
Suppose that the sequence $( f_1, \ldots , f_{t})$ is $S$-regular.
Let $X = V ( f_1, \ldots , f_{t} )$ be the variety defined by the equations $f_1 = 0, \ldots ,f_{t} = 0$ in $\mathbf{P}^r = \mathrm{Proj}(S)$, and $q:=\mathrm{dim}(X)=r-t$.
Write $(f_1 \cdots f_{t} )^{p-1} =  \sum c_{i_0, \ldots , i_{r}} X_0^{i_0} \cdots X_r^{i_r}$ and
\begin{equation}
\{ (k_0, \ldots , k_r) \in ( \mathbb{Z}_{<0} )^{r+1} : \sum_{i=0}^r k_i  = - \sum_{j=1}^{t} \mathrm{deg} ( f_j ) \} = \{ (k_0^{(1)}, \ldots , k_r^{(1)}), \ldots , (k_0^{(g)}, \ldots , k_r^{(g)} ) \}, \nonumber
\end{equation}
where we note that $g = \mathrm{dim}_K H^q (X, \mathcal{O}_X)$.
Then the representation matrix of the Frobenius $F^{\ast,}$ is given by
\begin{equation}
\left[
\begin{array}{ccc}
	c_{- k_0^{(1)} p + k_0^{(1)}, \ldots , - k_r^{(1)} p + k_r^{(1)}} & \cdots & c_{- k_0^{(g)} p + k_0^{(1)}, \ldots , - k_r^{(g)} p + k_r^{(1)}} \\
\vdots & & \vdots \\
	c_{- k_0^{(1)} p + k_0^{(g)}, \ldots , - k_r^{(1)} p + k_r^{(g)}} & \cdots & c_{- k_0^{(g)} p + k_0^{(g)}, \ldots , - k_r^{(g)} p + k_r^{(g)}}
\end{array}
\right]. \nonumber 
\end{equation}
\end{prop}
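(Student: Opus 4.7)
The plan is to specialize the framework of Algorithm (I) to the Koszul resolution and exploit the explicit lifting homomorphisms supplied by Lemma \ref{lem:koszul}. Since $(f_1,\ldots,f_t)$ is $S$-regular, the graded Koszul complex $K(f_1,\ldots,f_t)_{\mathrm{grd}}$ is a graded free resolution of $S/I$, and $K(f_1^p,\ldots,f_t^p)_{\mathrm{grd}}$ is one for $S/I_p$; the vertical maps $\psi_i$ of Lemma \ref{lem:koszul} send $\mathbf{e}_{j_1\ldots j_i}$ to $(f_{j_1}\cdots f_{j_i})^{p-1}\mathbf{e}_{j_1\ldots j_i}$. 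At the top degree, $M_t^{(p)}$ is the rank-one module $S(-dp)\mathbf{e}_{1\ldots t}$ with $d := \sum_{j=1}^{t}\deg(f_j)$, and $\psi_t$ is simply multiplication by $(f_1\cdots f_t)^{p-1}$; this is the only lifting that will intervene in the computation.

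Next I would identify $H^q(X,\mathcal{O}_X)$ via Lemma \ref{lem:main} with $q=r-t$. Since $K_{t+1}=0$, the map $\Phi_{t+1}$ vanishes, so there is no image to quotient out. Moreover, the hypothesis $d_{j_1\ldots j_{t-1}}\le r$ together with the standard computation of projective cohomology forces every summand $H^r(\mathbf{P}^r,\mathcal{O}_{\mathbf{P}^r}(-d_{j_1\ldots j_{t-1}}))$ of $H^r(\mathbf{P}^r,\mathcal{G}_{t-1})$ to vanish. Consequently
\[
H^q(X,\mathcal{O}_X) \;\cong\; \ker H^r(\Phi_t)\,\big/\,\mathrm{Im}\, H^r(\Phi_{t+1}) \;=\; H^r(\mathbf{P}^r,\mathcal{O}_{\mathbf{P}^r}(-d)),
\]
whose canonical monomial basis is exactly $\mathbf{b}_j = X_0^{k_0^{(j)}}\cdots X_r^{k_r^{(j)}}$ for $j=1,\ldots,g$, matching the enumeration in the statement. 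Thus in the notation of Step (B-1), the change-of-basis matrix $B$ is the identity.

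Then I would carry out Steps (B-2) and (B-3) directly. Step (B-2) raises each entry of $\mathbf{b}_i$ to the $p$-th power, giving $\mathbf{b}_i^{(p)} = X_0^{k_0^{(i)}p}\cdots X_r^{k_r^{(i)}p}$. Step (B-3) applies $H^r(\psi_t)$, which is multiplication by $(f_1\cdots f_t)^{p-1}$. Expanding
\[
(f_1\cdots f_t)^{p-1} \;=\; \sum c_{i_0,\ldots,i_r}\, X_0^{i_0}\cdots X_r^{i_r},
\]
we obtain
\[
F^{\ast,q}(\mathbf{b}_i) \;=\; \sum c_{i_0,\ldots,i_r}\, X_0^{i_0+k_0^{(i)}p}\cdots X_r^{i_r+k_r^{(i)}p}.
\]
Projecting onto the monomial basis of $H^r(\mathbf{P}^r,\mathcal{O}_{\mathbf{P}^r}(-d))$ keeps only the terms whose exponents are all strictly negative and sum to $-d$; the coefficient of $\mathbf{b}_j$ is read off by setting $i_s = k_s^{(j)} - k_s^{(i)}p$ for each $s$, yielding exactly the $(j,i)$-entry of the matrix in the statement.

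The main point to verify carefully is the vanishing $H^r(\mathbf{P}^r,\mathcal{G}_{t-1})=0$ under the hypothesis $d_{j_1\ldots j_{t-1}}\le r$; this is what collapses $\ker H^r(\Phi_t)/\mathrm{Im}\,H^r(\Phi_{t+1})$ to a single cohomology group $H^r(\mathbf{P}^r,\mathcal{O}_{\mathbf{P}^r}(-d))$ with an explicit monomial basis, so that no auxiliary linear-algebraic step is needed. After this reduction, the entire matrix of $F^{\ast,q}$ is encoded in the coefficients of a single polynomial $(f_1\cdots f_t)^{p-1}$, which also explains the $O(tM(p-1))$ bound stated in Theorem \ref{thm:main2}.
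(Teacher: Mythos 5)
Your proposal is correct and follows essentially the same route as the paper: specialize Lemma \ref{lem:koszul} with $n=p$ to obtain the lifting $\psi_t$ given by multiplication by $(f_1\cdots f_t)^{p-1}$, identify $H^q(X,\mathcal{O}_X)$ with $H^r(\mathbf{P}^r,\mathcal{O}_{\mathbf{P}^r}(-\sum_j \deg f_j))$ via the Koszul resolution and Lemma \ref{lem:main}, and read off the matrix entries from $(f_1\cdots f_t)^{p-1}\cdot X_0^{k_0^{(i)}p}\cdots X_r^{k_r^{(i)}p}$ in the monomial basis. You also make more explicit than the paper the precise role of the hypothesis $d_{j_1\ldots j_{t-1}}\le r$ (it kills $H^r(\mathbf{P}^r,\mathcal{G}_{t-1})$ and therefore $H^r(\Phi_t)$, collapsing the quotient to the full top cohomology group), which is a welcome clarification but not a different argument.
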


\begin{proof}
We use the same notation as in Lemma \ref{lem:koszul}, and take $n = p$.
Put $\varphi_i := \varphi_i^{(1)}$ and
\begin{equation}
\Phi_i:=\widetilde{\varphi_i},\quad \Phi_i^{(p)}:= \widetilde{\varphi_i^{(p)}},\quad \Psi:=\widetilde{\psi},\quad \mbox{and}\quad \Psi_i:=\widetilde{\psi_i}.
\end{equation}
By Lemma \ref{lem:koszul}, the following diagram commutes:
$$\xymatrix{
\qquad H^q (X, \mathcal{O}_X) \ar[r]^{\cong} \ar[d]^{\left(F_1 |_{X^{p}}\right)^{\ast,q}} \ar@(l,l)[dd]_{F^{\ast, q}} & H^{q+1} ( \mathbf{P}^r, \widetilde{I} ) \ar[r]^(0.45){\cong}  \ar[d]^{F_1^{\ast,q}} & \Ker \left( H^r( \Phi_{r-q}) \right) \ar[d]^{\mbox{\rm power } p} \ar[r]^(0.4){\cong} & H^r (\mathbf{P}^r, \mathcal{O}_{\mathbf{P}^r} (- \sum_{j=1}^t d_j)) \ar[d]^{\mbox{\rm power } p} \\
\qquad H^q (X^p, \mathcal{O}_{X^p}) \ar[r]^{\cong} \ar[d]^{H^q ( \Psi )} & H^{q+1} ( \mathbf{P}^r, \widetilde{I_p} ) \ar[r]^(0.45){\cong} \ar[d]^{H^{q+1} ( \Psi_0 )} &  \Ker \left( H^r ( \Phi_{r-q}^{(p)} ) \right) \ar[d]^{H^r (\Psi_{r-q})} \ar[r] & H^r (\mathbf{P}^r, \mathcal{O}_{\mathbf{P}^r} (- \sum_{j=1}^t d_j p)) \ar[d]^{(f_1 \cdots f_t)^{p-1}} \\
\qquad H^q (X, \mathcal{O}_X) \ar[r]^{\cong} & H^{q+1} ( \mathbf{P}^r, \widetilde{I} ) \ar[r]^(0.45){\cong} & \Ker \left( H^r \left( \Phi_{r-q} \right) \right) \ar[r]^(0.4){\cong} & H^r (\mathbf{P}^r, \mathcal{O}_{\mathbf{P}^r}(- \sum_{j=1}^t d_j))
}$$
where $F_1$ (resp. $F$) is the Frobenius morphism on $\mathbf{P}^r$ (resp. $X$) and $X^p :=  V \left( f_1^p, \ldots, f_{t}^p \right)$.
The $K$-vector space $H^r (\mathbf{P}^r, \mathcal{O}_{\mathbf{P}^r}(- \sum_{j=1}^t d_j))$ has a basis $\{ X_0^{k_0} \cdots X_r^{k_r} : (k_0, \ldots , k_r) \in ( \mathbb{Z}_{<0} )^{r+1} ,\ \sum_{i=0}^r k_i  = - \sum_{j=1}^{t} \mathrm{deg} ( f_j )  \}$.
For each $(k_0^{(i)}, \ldots , k_r^{(i)})$, we have
\begin{eqnarray}
(f_1 \cdots f_t)^{p-1} \cdot \left( X_0^{k_0^{(i)}} \cdots X_r^{k_r^{(i)}} \right)^p & = &(f_1 \cdots f_t)^{p-1} \cdot  X_0^{k_0^{(i)} p} \cdots X_r^{k_r^{(i)} p } \nonumber \\
& = & \sum c_{i_0, \ldots , i_{r}} X_0^{i_0+k_0^{(i)}p} \cdots X_r^{i_r+k_r^{(i)}p} \nonumber \\
& = & \sum_{j=1}^g c_{- k_0^{(i)} p + k_0^{(j)}, \ldots , - k_r^{(i)} p + k_r^{(j)}} X_0^{k_0^{(j)}} \cdots X_r^{k_r^{(j)}}. \nonumber
\end{eqnarray}
Hence our claim holds.
\end{proof}

\subsection{Algorithm and Complexity}

Proposition \ref{prop:HW_more_general} gives a simplification of Algorithm (I) in Section \ref{sec:main_alg} if the input $(f_1, \ldots , f_t)$ is $S$-regular and if $q = \mathrm{dim}(X)=r-t$:
To compute $F^{\ast,q}$, we not necessarily compute any free resolution, but only $(f_1 \cdots f_t)^{p-1}$.
Moreover this method is viewed as a generalization of a standard method to compute $F^{\ast,1}$ for elliptic curves, see Section \ref{sec:Intro} or \cite[Chapter I\hspace{-.1em}V]{Har}.
Here we write down an algorithm for complete intersections:

\paragraph{Algorithm (I\hspace{-.1em}I) (algorithm for complete intersections)}\label{sec:main_alg2}
Let $f_1, \ldots , f_{t}$ be homogeneous polynomials in $S = K [ x_0, \ldots , x_r ]$ with $d_{j_1 \ldots j_{t-1}} := \sum_{k=1}^{t-1} \mathrm{deg} ( f_{j_k} ) \leq r$ for all $1 \leq j_1 < \cdots < j_{t-1} \leq t$ such that $\mathrm{gcd}( f_i, f_j ) = 1$ in $S:=K [ X_0, \ldots , X_r]$ for $i \neq j$. 
Given $f_1, \ldots , f_t$ such that $(f_1, \ldots , f_t)$ is $S$-regular, a rational prime $p$ and an integer $q=r-t=\mathrm{dim}(X)$, we give an algorithm to compute the representation matrix for the action of Frobenius to $H^q ( X, {\cal O}_X )$, where $X = V ( f_1, \ldots , f_t )$.
Write $(f_1 \cdots f_{t} )^{p-1} =  \sum c_{i_0, \ldots , i_{r}} X_0^{i_0} \cdots X_r^{i_r}$ and
\begin{equation}
\{ (k_0, \ldots , k_r) \in ( \mathbb{Z}_{<0} )^{r+1} : \sum_{i=0}^r k_i  = - \sum_{j=1}^{t} \mathrm{deg} ( f_j ) \} = \{ (k_0^{(1)}, \ldots , k_r^{(1)}), \ldots , (k_0^{(g)}, \ldots , k_r^{(g)} ) \}, \nonumber
\end{equation}
where we set $g = \mathrm{dim}_K H^q (X, \mathcal{O}_X)$.
\begin{enumerate}
\item Compute the coefficients $c_{- k_0^{(i)} p + k_0^{(j)}, \ldots , - k_r^{(i)} p + k_r^{(j)}}$ for $1 \leq i, j \leq g$ in $(f_1 \cdots f_t)^{p-1}$.
\item Output
\begin{equation}
\left[
\begin{array}{ccc}
	c_{- k_0^{(1)} p + k_0^{(1)}, \ldots , - k_r^{(1)} p + k_r^{(1)}} & \cdots & c_{- k_0^{(g)} p + k_0^{(1)}, \ldots , - k_r^{(g)} p + k_r^{(1)}} \\
\vdots & & \vdots \\
	c_{- k_0^{(1)} p + k_0^{(g)}, \ldots , - k_r^{(1)} p + k_r^{(g)}} & \cdots & c_{- k_0^{(g)} p + k_0^{(g)}, \ldots , - k_r^{(g)} p + k_r^{(g)}}
\end{array}
\right]. \nonumber
\end{equation}
\end{enumerate}

\paragraph{Complexity}
The complexity heavily depends on one's choice of algorithms for computing the multiplication and the power computation over the multivariate polynomial ring $K [X_0, \ldots , X_r]$;
for a fixed $r$, the complexity can be bounded in polynomial time with respect to $\mathrm{max}_{1 \leq j \leq t}(\mathrm{deg} (f_j))$ and $p$, see e.g., \cite[Theorem 3.1]{Horo}.

\section{Examples and Experimental results}

This section shows computational examples and experimental results obtained by our implementation over Magma \cite{Magma}, \cite{MagmaHP}.

\subsection{Examples}





\begin{ex}\label{ex:X_67}
Let $K$ be a perfect field of characteristic $p > 2$.
Put
\begin{eqnarray}
f & := & 5 v z - 2 w x - 3 w y + w z, \nonumber \\
g & := & 10 v^2 + 5 w v - 5 w^2 + 4 x^2 - 12 x y + 2 x y - 2 y^2 - 35 y z - 12 z^2, \nonumber \\
h & := & 15 v^2 - 5 w v + 5 w^2 + 8 x^2 - 12 x y - 14 x z - 11 y^2 - 3 y z + 15 z^2, \nonumber 
\end{eqnarray}
and $C := V (f, g, h) \subset \mathbf{P}^4$.
The curve $C$ is the (classical) modular curve of level $67$, say $C = X_0 (67)$.
For defining equations for modular curves, see e.g., \cite{Gal_phd}.
In the following, we compute the representation matrix for the Frobenius action to $H^1 (C, \mathcal{O}_C)$ for the case of $p=3$.
In this case, we have the following commutative diagram:
$$\xymatrix{
0 \ar[r]^(0.45){\varphi_{4}^{( p )}} & S (- 6 p )  \ar[d]^{\psi_{3}} \ar[r]^(0.4){\varphi_{3}^{( p )}}  &  \bigoplus_{j=1}^3 S(-4 p )  \ar[d]^{\psi_{2}} \ar[r]^(0.5){\varphi_{2}^{( p )}} &              \bigoplus_{j=1}^3 S(-2 p ) \ar[d]^{\psi_{1}} \ar[r]^(0.7){\varphi_{1}^{( p )}} & S  \ar[d]^{\psi_{0}} \ar[r]^(0.5){\varphi_0^{( p )}} & S / I_p                  \ar[d]^{\psi} \ar[r]    & 0            \\
0 \ar[r]^(0.45){\varphi_{4}} & S (-6)                                    \ar[r]^(0.4){\varphi_3} & \bigoplus_{j=1}^3 S(-4 )                                  \ar[r]^(0.5){\varphi_2} & \bigoplus_{j=1}^3 S(-2 )   \ar[r]^(0.7){\varphi_{1}} & S         \ar[r]^(0.5){\varphi_0} & S / I     \ar[r]    & 0            
}$$
For the representation matrices of the above homomorphisms, see the text files on the web page of the author~\cite{HPkudo}.
The $1$st cohomology group $H^1 ( C, \mathcal{O}_C )$ has a basis
\[
\left\{ \frac{1}{x^2 y z v w}, \frac{1}{x y^2 z v w}, \frac{1}{x y z^2 v w}, \frac{1}{x y z v^2 w}, \frac{1}{x y z v w^2} \right\},
\]
which indicates that $C$ is a curve of genus $5$.
From the output of our program, the representation matrix of $F^{\ast}$ is
\[
\left[
\begin{array}{ccccc}
1 & 1 & 0 & 0 & 0 \\
2 & 0 & 2 & 0 & 0 \\
0 & 2 & 1 & 0 & 0 \\
0 & 0 & 0 & 0 & 0 \\
0 & 0 & 0 & 1 & 0
\end{array}
\right],
\]
and its rank is equal to $3$.
The Eigen polynomial is $a^5 + a^4 + a^3$, where $a$ is an indeterminate.
\end{ex}

\begin{rem}
In Example \ref{ex:X_67}, the tuple $(f, g , h )$ is $S$-regular with $\mathrm{deg} (f) =  \mathrm{deg}(g)= \mathrm{deg}(h)=2$.
Thus we can apply a method proposed in \cite[Appendix B]{KH16}.
\end{rem}

\begin{ex}\label{ex:23}
Let $K$ be a perfect field of characteristic $p > 0$.
Put
\begin{eqnarray}
f_1 & := & Y^2 + (- X_3 - X_1 - X_0 ) Y + 2 X_3 X_2 +3 X_1^2 - 2 X_1 X_0 + 2 X_0^2, \nonumber \\
f_2 & := & X_1^2 - X_0 X_2, \quad f_3 :=  X_2^2 - X_1 X_3, \quad f_4 := X_3 X_0 - X_2 X_1, \nonumber
\end{eqnarray}
and $C := V (f_1, f_2 , f_3, f_4) \subset \mathbf{P}^4$.
The curve $C$ is a normalization of the modular curve $X_0 (23)$, which is a hyperelliptic curve of genus $2$ given as an affine model in \cite{BN15}.
For a method of the normalization of hyperelliptic curves, see \cite[Chapter 10]{Gal}.
In what follows, we compute the representation matrix for the Frobenius action to $H^1 (C, \mathcal{O}_C)$ for the case of $p=5$.
By a similar way to Example \ref{ex:X_67}, we can compute a basis of $H^1 ( C, \mathcal{O}_C )$.
(For more information of the computation, see the text files on the web page of the author~\cite{HPkudo}.)
The output basis of $H^1 ( C, \mathcal{O}_C )$ is
\[
\left\{
\left[
\begin{array}{cccc}
0 & \frac{1}{X_0 X_1 X_2 X_3 Y} & 0  & 0
\end{array}
\right],
\left[
\begin{array}{cccc}
0 & 0 & \frac{1}{X_0 X_1 X_2 X_3 Y} &  0
\end{array}
\right]
\right\},
\]
which indicates that $C$ is a curve of genus $2$.
From the output of our program, the representation matrix of $F^{\ast}$ is
\[
\left[
\begin{array}{cc}
0 & 3\\
3 & 3
\end{array}
\right],
\]
and it has full-rank.
The Eigen polynomial is $a^2 + 2 a + 1$, where $a$ is an indeterminate.
\end{ex}

\if 0
\begin{ex}
Let $K$ be a perfect field of characteristic $p > 0$.
Put
\begin{eqnarray}
f_1 & := & Y^2 + (- X_6 - X_4 - X_2) Y + 2 X_6 X_5 - 5 X_5^2 + 7 X_5 X_4 - 10 X_4 X_3 \nonumber \\
& & + 16 X_3^2 - 10 X_3 X_2 + X_2^2 + 6 X_2 X_1 - 5 X_1^2 + X_1 X_0 + 2 X_0^2, \nonumber \\
f_2 & := & X_1^2 - X_0 X_2, \quad f_3 := X_2^2 - X_1 X_3, \quad f_4 := X_3^2 - X_2 X_4, \nonumber \\
f_5 & := & X_4^2 - X_3 X_5, \quad f_6 := X_5^2 - X_4 X_6, \quad f_7 := X_6 X_0 - X_3^2, \nonumber \\
f_8 & := & X_6 X_1 - X_4 X_3, \quad f_9 := X_6 X_2 - X_4^2, \quad f_{10} := X_6 X_3 - X_5 X_4 \nonumber
\end{eqnarray}
and $C := V (f_1, \ldots , f_{10}) \subset \mathbf{P}^7$.
The curve $C$ is the modular curve $X (59)$.

In this case, we have the following commutative diagram:

Here we compute the representation matrix for the action of Frobenius to the 1st cohomology group for the case of $p=3$.

\end{ex}
\fi

\subsection{Experimental results}

To confirm practical time performance of our algorithm,
we compute the representation matrix for the Frobenius action $F^{\ast,1}$ to the 1st cohomology group of $X_0 (23)$ for $3 \leq p \leq 17$.
Table \ref{table:ex} shows our experimental results for $X_0(23)$ of Example \ref{ex:23}.
We use the same notation as in Section \ref{sec:alg_comp}.

\begin{table}[htb]
 \begin{center}
   \caption{Experimental results to examine time performance on our algorithm for $X_0(23)$ in $\mathbf{P}^4$}\label{table:ex}
\vspace{0.2cm}
   \begin{tabular}{c|c|c|c|c|c|c} \hline
   $p$ & $\alpha$ & $D$ & Rank of $F^{\ast,1}$ & Eigen polynomial & Time for Step A & Time for Step B \\ \hline 
   $3$ &        $221$   & $7$ &                     $2$           &  $a^2 + 1$          &    ~~0.040                     & 0.010      \\ \hline
   $5$ &        $2975$   & $7$ &                     $2$           &  $a^2 + 2 a + 1$ &    ~~0.315                     & 0.015 \\ \hline
   $7$ &        $13720$   & $7$ &                     $2$           &  $a^2 + 5 a + 3$  &   ~1.680                    & 0.052 \\ \hline
   $11$ &      $104891$   & $7$ &                     $2$           &  $a^2 + 6 a + 4$  &   11.044                     & 0.360        \\ \hline
   $13$ &      $215664$   & $7$ &                     $2$           &  $a^2 + 7 a + 9$   &  24.186                     & 0.761        \\ \hline
   $17$ &      $676146$   & $7$ &                     $2$           &  $a^2 + 11 a + 4$ &  84.482                     & 2.664 \\ \hline   
	 \end{tabular}
\end{center}
\end{table}

\paragraph{Observation}
\begin{description}
\item[{\it Time performance:}] Recall from Corollary \ref{rem_comp3} that the binary complexity of Step B of Algorithm (I) is estimated as 
\begin{eqnarray}
O ( D^4 (\mathrm{log} (p))^3 + \alpha^2 D^2 (\mathrm{log} (p) )^4 ). 
\label{result_comp_5}
\end{eqnarray}
In the cases of our experiments, $D = 7$ is fixed. 
We here examine that time performance is better than the estimated upper bound $O ( \alpha^2(\mathrm{log} (p) )^4 )$. 
For $(p_1, \alpha_1) = (11,104891)$ and $(p_2, \alpha_2) = (17, 676146)$, we calculate the ratio $\alpha_2^2(\mathrm{log} (p_2) )^4 / \alpha_1^2(\mathrm{log} (p_1) )^4$.
One gets this value $\approx 131.33$.
From experimental results in Table 1, the ratio of time for Step B is $\approx 7.40$, which is smaller than $131.33$.
We can say the same for the other cases, e.g., $(p_3, \alpha_3) = (7, 13720)$ and $(p_4, \alpha_4) = (13, 215664)$.
From this, we observe that time performance of Step B is better than $O ( \alpha^2(\mathrm{log} (p) )^4 )$ for this computational example.
\item[{\it Correctness:}] 
In \cite{BN15}, an affine model of $X_0 (23)$ is given as
\[
y^2 + (- x^3 - x - 1) y = - 2 x^5 - 3 x^2 + 2 x - 2,
\]
ans its genus is $2$.
With Yui's method \cite{Yui}, one can calculate the rank and the Eigen polynomial of $F^{\ast,1}$,
which coincide with those in Table \ref{table:ex}.
\item[{\it Value of $\alpha$:}] For large $p$, we see that $\alpha$ takes an extremely large value.
However, rather than $p$, the value $\alpha$ depends on lifting morphisms computed in Step (A-2).
This means that a computational method adopted in Step (A-2) deeply affects total time performance and memory usage.
Thus, if one computes lifting morphisms such that $\alpha$ is small, Algorithm (I) may perform more efficiently, and save 
memory usage.
\end{description}

\section{Concluding remarks}

In this paper, we proposed an explicit algorithm to compute the representation matrix for the Frobenius action $F^{\ast,q}$ to the cohomology groups of  {\it arbitrary} varieties with defining equations in $\mathbf{P}^r$.
Under some assumptions, our algorithm was proved to be terminated in polynomial time with respect to three asymptotic parameters:
The first is the characteristic $p$, and the second is a certain mathematical invariant $D$ for the input variety.
The third is $\alpha$, which depends on lifting homomorphisms computed in a subroutine of our algorithm.
To confirm efficiency and the correctness, we also demonstrated some computational examples, one of which is $X_0(23)$, the (classical) modular curve of level $23$.
Our computational results coincide with theoretical results computed by Yui's method \cite{Yui} for defining equations as affine models \cite{BN15}.
This implies that our implementation over Magma returns in practical theoretically correct values.
Experimental results also suggest that time performance of our algorithm is better than an estimated upper bound.
From the experimental results together with our complexity analysis, we conclude that our algorithm provides a useful computational tool to investigate algebraic varieties and related invariants, which is expected to bring further theoretical/computational results in mathematics.




Our algorithm, however, performs efficiently under the assumption that one can get free resolutions and lifting homomorphisms efficiently.
Moreover, the value $\alpha$, which is determined by lifting homomorphisms, deeply affects time performance and memory usage.
Hence, we need to improve the computation of free resolutions and lifting homomorphisms, and find a new method to get lifting homomorphisms such that $\alpha$ takes a small value.
It is our future work.


\end{document}